\newtheorem{thm}{Theorem}[section]
\newtheorem{lemma}[thm]{Lemma}
\newtheorem{prop}[thm]{Proposition}
\newtheorem{cor}[thm]{Corollary}
\newtheorem{rem}[thm]{Remark}
\newtheorem{problem}[thm]{Problem}
\newtheorem{question}[thm]{Question}
\newcommand{\R}{{\mathbb{R}}}
\newcommand{\N}{{\mathbb{N}}}
\newcommand{\cF}{{\mathcal{F}}}
\def\id{{1\hskip-2.5pt{\rm l}}}
\newcommand{\tF}{{\widetilde{F}}}
\newcommand{\tG}{{\widetilde{G}}}
\newcommand{\osc}{{\hbox{\rm osc}}}
\newcommand{\Ham}{{\hbox{\it Ham\,}}}
\newcommand{\Hamc}{{{\it Ham^c}}}
\newcommand{\tHam}{\widetilde{\hbox{\it Ham}\, }}
\newcommand{\supp}{{\it supp\,}}
\newcommand{\Qed}{\hfill \qedsymbol \medskip}
\begin{document}

\title{$C^0$-rigidity of the double Poisson bracket \\
}

\renewcommand{\thefootnote}{\alph{footnote}}

\author{\textsc Michael Entov$^{a}$,\ Leonid
Polterovich$^{b}$}

\footnotetext[1]{Partially supported by the Israel Science
Foundation grant $\#$ 881/06.} \footnotetext[2]{Partially
supported by the Israel Science Foundation grant $\#$ 509/07.}

\date{\today}

\maketitle

\begin{abstract}
\noindent The paper is devoted to function theory on symplectic
manifolds.  We study a natural class of functionals involving the
double Poisson brackets from the viewpoint of their robustness
properties with respect to small perturbations in the uniform norm.
We observe a hierarchy of such robustness properties.  The methods
involve Hofer's geometry on the symplectic side and
Landau-Hadamard-Kolmogorov inequalities on the function-theoretic
side.
\end{abstract}

\vfill\eject

\tableofcontents

\renewcommand{\thefootnote}{\arabic{footnote}}
\vfil \eject

\section{Introduction and main results}
\label{sec-intro}

A mainstream topic of modern symplectic topology is the study of
rigidity properties of subsets and Hamiltonian diffeomorphisms of
symplectic manifolds. A number of recent developments (see
\cite{Car-Vit,EPZ,Z,Hum,EP-Poisson1,Buh}) show that there is
another manifestation of symplectic rigidity which takes place on
function spaces associated to a symplectic manifold. This circle
of problems, which we call {\it function theory on symplectic
manifolds}, lies in the focus of the present paper.

\subsection{A dichotomy}
Let $(M, \omega)$ be a connected symplectic manifold (open or
closed). Denote by $C^\infty_c (M)$ the space of smooth compactly
supported functions on $M$ equipped with the Poisson bracket
$\{F,G\}$.  Write $\|\cdot \|$ for the standard {\it uniform norm}
(also called the {\it $C^0$-norm}) on it: $ \| F\| := \max_{x\in
M} |F(x)|$. As it was shown in \cite{EP-Poisson1} the functionals
$(F,G) \mapsto \max \{F,G\}$ and $(F,G) \mapsto -\min \{F,G\}$ are
lower semicontinuous on $C^\infty_c (M) \times C^\infty_c (M)$
with respect to the uniform norm. Thus, even though the Poisson
bracket of a pair of functions is defined via their first
derivatives, it exhibits a robust behavior under $C^0$-small
perturbations.

In the present paper we explore the double Poisson bracket.  We
focus on non-negative\footnote{In the case of a closed $M$ the
mean value of a Poisson bracket of two functions on $M$ is zero,
hence its minimum is non-positive and its maximum is non-negative.
In the open case the latter properties are obvious since we work
with compactly supported functions.} functionals $\Phi^v(F,G)$ of
the form
$$\Phi^v(F,G)= v_1 \cdot \max\{\{F,G\},F\} - v_2 \cdot \min\{\{F,G\},F\}$$
$$+v_3 \cdot\max\{\{F,G\},G\} -v_4 \cdot\min\{\{F,G\},G\}\;,$$
where $v =(v_1,v_2,v_3,v_4) \in \R^4$ is a non-zero vector with
non-negative entries. Interestingly enough, these functionals
exhibit different patterns of behavior depending on $v$. To
highlight the phenomenon, given a non-negative functional
$\Phi(F,G)$, we form a new functional
\begin{equation}\label{eq-bar}\overline{\Phi}(F,G):=
\liminf_{F',G'\stackrel{C^0}{\longrightarrow}F,G}
\Phi(F',G')\;\end{equation} and introduce the following
terminology: A functional $\Phi$ is called {\it weakly robust} if
$\overline{\Phi}(F,G)> 0$ whenever $\Phi(F,G) > 0$. With this
language $\Phi$ is {\it lower semicontinuous } if $\overline{\Phi}
=\Phi$.

\medskip
\noindent \begin{thm} [Dichotomy] \label{thm-main-1}
\item[{(i)}]  If either $v_3=v_4 =0$ {\bf or}
$v_1=v_2=0$,  $\Phi^v$ is weakly robust but not lower
semicontinuous;
\item[{(ii)}] If at least one of $v_1,v_2$ is positive
{\bf and} at least one of $v_3,v_4$ is positive, $\Phi^v$ is lower
semicontinuous.
\end{thm}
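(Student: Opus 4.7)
The plan is to treat parts (ii) and (i) separately, using Landau--Hadamard--Kolmogorov inequalities as the main analytic tool to bridge the $C^0$-topology and Poisson-bracket functionals, together with the single-bracket $C^0$-lower semicontinuity established in \cite{EP-Poisson1} as a black box. The starting observation is the dynamical identity $\{\{F,G\},F\}(x) = -\frac{d^2}{dt^2}\big|_{t=0} G(\phi^t_F(x))$ (and its $F \leftrightarrow G$ analogue). Applying Landau's inequality $\|f'\|_\infty^2 \leq 2\|f\|_\infty\|f''\|_\infty$ to the one-variable function $f(t) = G_n(\phi^t_{F_n}(x))$ yields the key estimate
\[
\|\{F_n,G_n\}\|_\infty^2 \;\leq\; C\,\|G_n\|_\infty\,\|\{\{F_n,G_n\},F_n\}\|_\infty,
\]
together with the symmetric inequality obtained by swapping $F$ and $G$. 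These two estimates are the main bridges between the double-bracket functional and the single-bracket rigidity.

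For part (ii), let $F_n \to F$, $G_n \to G$ in $C^0$. Single-bracket lsc applied to $(F_n,G_n)$ yields $\liminf \|\{F_n,G_n\}\|_\infty \geq \|\{F,G\}\|_\infty$. The two Landau estimates above, combined with $\|F_n\|_\infty \to \|F\|_\infty$ and $\|G_n\|_\infty \to \|G\|_\infty$, then provide lower bounds on both $\liminf\|\{\{F_n,G_n\},F_n\}\|_\infty$ and $\liminf\|\{\{F_n,G_n\},G_n\}\|_\infty$. To upgrade these uniform-norm bounds to bounds on $\max$ and $-\min$ (the ingredients of $\Phi^v$), one uses the zero-mean property of iterated Poisson brackets on $M$ (so max and $-\min$ are simultaneously controlled by $\|\cdot\|_\infty$) and applies single-bracket lsc separately to the pairs $(\{F_n,G_n\},F_n)$ and $(\{F_n,G_n\},G_n)$ by localising near points where the limiting extrema are attained, thus circumventing the lack of $C^0$-convergence of $\{F_n,G_n\}$. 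The hypothesis that both $F$-type and $G$-type terms carry positive weight is essential here: each flow direction contributes independent information, and summing the two yields control strong enough to recover $\Phi^v(F,G)$ in the limit.

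For part (i), weak robustness follows from the same chain of inequalities: assuming (say) $v_3=v_4=0$ and $\Phi^v(F,G)>0$, one has $\{\{F,G\},F\}\not\equiv 0$, hence $\|\{F,G\}\|_\infty>0$; single-bracket lsc gives $\liminf\|\{F_n,G_n\}\|_\infty\geq \|\{F,G\}\|_\infty >0$, and then the Landau estimate forces $\liminf\|\{\{F_n,G_n\},F_n\}\|_\infty>0$, so that $\overline{\Phi^v}(F,G)>0$ after invoking the zero-mean property once more. To prove the \emph{failure} of lower semicontinuity, an explicit counterexample is constructed in a Darboux chart, starting from a pair $(F,G)$ for which $\{\{F,G\},F\}$ is concentrated and of definite sign on a region, and perturbing by small, rapidly oscillating terms whose contribution to the $F$-directional double bracket partially cancels the target; since $v_3=v_4=0$, the corresponding growth of the $G$-directional double bracket is irrelevant, which is precisely where the perturbation ``hides its derivatives''. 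The main obstacle lies here: the high-frequency ansatz must be designed so that its contribution has both the right sign and a strictly decreasing effect on $\Phi^v$ in the $C^0$-limit, paralleling a one-dimensional sharp-constant Landau-type construction adapted to symplectic action-angle coordinates, and then globalised by partition of unity.
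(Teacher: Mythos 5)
Your argument for part (ii) contains a genuine gap that cannot be patched within the Landau--Hadamard framework. The chain of inequalities you set up --- single-bracket lsc gives $\liminf\|\{F_n,G_n\}\|\geq\|\{F,G\}\|$, then Landau's inequality gives $\max\{\{F_n,G_n\},F_n\}\geq \|\{F_n,G_n\}\|^2/(2\,\osc G_n)$ --- delivers exactly \emph{weak robustness}: a positive lower bound on $\liminf\max\{\{F_n,G_n\},F_n\}$. But the quantity $\|\{F,G\}\|^2/(2\,\osc G)$ is in general strictly smaller than $\max\{\{F,G\},F\}$, so this chain cannot yield $\liminf\Phi^v(F_n,G_n)\geq\Phi^v(F,G)$, which is what lower semicontinuity requires. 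Your attempt to close the gap by ``applying single-bracket lsc to the pairs $(\{F_n,G_n\},F_n)$, localising near limiting extrema'' does not work: $\{F_n,G_n\}$ has no $C^0$-convergence to $\{F,G\}$, so the lsc theorem of \cite{EP-Poisson1} is inapplicable to those pairs, and no amount of localisation evades this. Indeed, the whole content of Theorem~\ref{thm-no-semicontinuity} is that such naive reasoning fails --- $\max\{\{F_N,G\},F_N\}$ can drop strictly below $\max\{\{F,G\},F\}$ under $C^0$-perturbation. The paper's actual proof of (ii) requires genuinely different input: the Hofer-geometric fact (McDuff) that short one-parameter segments minimize the positive Hofer length, combined with a BCH expansion of a carefully designed commutator $\theta(F,G)=[\phi_{-F}\phi_{-G},\phi_{F+G}]$, to prove the Convergence Rate Theorem \ref{thm-rate}, and then Lemma~\ref{lem-vsp-iii} plus the $D_4$-action to propagate lsc from the ``balanced'' functional $\mu_++\nu_+$ to general $\Phi^v$. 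None of this appears in your proposal.

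For part (i), your weak-robustness argument does match the paper's (it is precisely Proposition~\ref{thm-double-bracket} combined with single-bracket lsc). But your treatment of the failure of lower semicontinuity is only a sketch of intent: you describe the desiderata of a high-frequency perturbation but never construct it, and you yourself flag this as ``the main obstacle.'' The paper supplies the concrete ansatz $F_N=u(p)+\tfrac1N a(q)\sin Nu(p)$ with specific choices of $a$ and $w=v'$ forcing the factor $r_{\alpha,\gamma}(z)=(\alpha z+1)^2-\gamma(\alpha+z)$ to stay strictly inside $(-0.99,0.99)$; without that calculation (or an equivalent one) the claim is unsupported. So, to summarize: the weak-robustness half of (i) is correct and on-method; the rest of (i) and all of (ii) have gaps, with the part (ii) gap being conceptual --- the Landau--Hadamard route is inherently limited to weak robustness and cannot reach the sharper lower-semicontinuity statement without the Hofer-geometric machinery.
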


\medskip
\noindent In order to verify the  the failure of lower
semicontinuity in (i) we proceed as follows. First we consider the
case when $(M,\omega) = (\R^2(p,q), dp\wedge dq)$. We put $F=F(p),
G=G(q)$, and $$F_N(p,q) = F(p) + \frac{1}{N}a(q) \sin NF(p).$$
Note that $F_N\stackrel{C^0}{\longrightarrow}F$ as $N \to \infty$.
We construct a rather explicit example of the functions $F,G,a$ so
that for sufficiently large $N$
$$\max (\pm \{\{F_N,G_N\},F_N\}) < 0.99\cdot \max (\pm
\{\{F,G\},F\}).$$ This example can be implanted into arbitrary
symplectic manifolds which eventually yields the desired result (see
Section~\ref{sec-oscFGG-example} and \ref{sec-end} for the details).
The remaining statements of the Dichotomy Theorem deserve a more
detailed discussion.

\subsection{Landau-Hadamard inequality for the Poisson
bra\-cket} Let $(M,\omega)$ be a connected symplectic manifold of
dimension $2n$. Put $\osc\, F := \max F - \min F$. Note that,
given $F,G \in C^{\infty}_c(M)$, one has \break $\int_M \{F,G\}
\;\omega^n =0$, and thus $\osc\;\{F,G\} \geq ||\{F,G\}||$.

The next inequality is a variant of the classical Landau-Hadamard
inequality \cite{Landau, Hadamard}, cf. \cite{Mitrinovic-et-al},
in the context of the Poisson brackets:
\begin{prop}
\label{thm-double-bracket}Assume $F, G\in C^\infty_c (M)$ and
$G\not\equiv 0$. Then \begin{equation}
\label{eqn-double-bracket-osc} \max \{\{F,G\}, F\} \geq \frac{
||\{ F, G\}||^2 }{2 \cdot \osc\, G}.
\end{equation}
\end{prop}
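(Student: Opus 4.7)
The plan is to reduce the inequality to a one-variable Landau--Hadamard estimate applied to the restriction of $G$ to a Hamiltonian orbit of $F$. Set $H := \{F,G\}$ and $K := \max\{\{F,G\},F\}$, and choose $x_0 \in M$ with $|H(x_0)|$ within $\varepsilon$ of $\|H\|$. The identities $\{\{-F,G\},-F\} = \{\{F,G\},F\}$ and $\|\{-F,G\}\| = \|\{F,G\}\|$ allow me to replace $F$ by $-F$ without altering either side of the inequality, so I may arrange $H(x_0) > 0$.

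Let $\phi_t$ denote the Hamiltonian flow of $F$ and set $g(t) := G(\phi_t(x_0))$. Differentiating along the flow yields
\[
g'(0) = \pm H(x_0), \qquad g''(t) = -\{\{F,G\},F\}(\phi_t(x_0)),
\]
with the sign on $g'(0)$ dictated by the sign convention for the Poisson bracket; after reversing $t$ if needed, I may take $g'(0) = H(x_0) > 0$. Thus the one-sided bound $g''(t) \geq -K$ holds for every $t$. Crucially, since $g$ is the restriction of $G$ to a single orbit, $\osc g \leq \osc G$.

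The proof then reduces to the following one-variable lemma: if $g \colon \R \to \R$ is smooth with $\osc g \leq L$, $g'(0) = a > 0$ and $g''(t) \geq -K$ everywhere, then $a^2 \leq 2LK$. Indeed, integrating the bound on $g''$ gives $g'(t) \geq a - Kt$ on $[0, a/K]$, so $g$ is increasing there and
\[
g(a/K) - g(0) \;\geq\; \int_0^{a/K} (a - Kt)\,dt \;=\; \frac{a^2}{2K},
\]
which combined with $\osc g \leq L$ forces $a^2 \leq 2LK$. Applying this with $a = H(x_0)$ and $L = \osc G$, and letting $\varepsilon \to 0$, yields the desired inequality.

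The essential subtlety requiring care is that the statement gives a lower bound on $\max\{\{F,G\},F\}$ rather than on $\max|\{\{F,G\},F\}|$; this is precisely why only the one-sided estimate $g''(t) \geq -K$ is exploited. A two-sided bound would have produced the weaker inequality with $|\{\{F,G\},F\}|$ on the right and a factor $4$ in place of $2$, as in the classical Landau--Hadamard inequality. No Hofer-theoretic input is needed for this proposition; the entire argument rests on the fact that the Hamiltonian flow of $F$ is well-defined for compactly supported $F$ together with the elementary one-variable computation above.
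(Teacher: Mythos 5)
Your proof is correct and follows essentially the same route as the paper: it restricts $G$ to a Hamiltonian orbit of $F$ through a point where $|\{F,G\}|$ is (nearly) maximal and then applies the one-sided Landau--Hadamard estimate in one variable, exactly as in the paper's Proposition~\ref{ineq-lh} (your $g$ differs from the paper's $u$ only by a sign, so your lower bound $g'' \ge -K$ is the paper's upper bound $\sup u'' \le K$). The small cosmetic differences (the $\varepsilon$-approximation, which is unnecessary since $\{F,G\}$ has compact support, and the explicit sign normalizations) do not change the argument.
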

\noindent The proof is given in Section~\ref{sec-LH} below. Since,
as we mentioned at the beginning of the paper, the functional $||
\{F,G\}||$ is lower semicontinuous \cite{EP-Poisson1}, we readily
get that
\begin{equation}
\label{eqn-double-bracket-liminf-osc}
\liminf_{F',G'\stackrel{C^0}{\longrightarrow}F,G} \max \{\{F',G'\},
F'\} \geq \frac{|| \{ F, G\}||^2 }{2\cdot \osc\, G}>0\;,
\end{equation} which implies that the functional $\max
\{\{F,G\},F\}$ is weakly robust. After elementary algebraic
manipulations this yields weak robustness in Theorem
\ref{thm-main-1}(i), see Section~\ref{sec-end}.

\subsection{The convergence rate }
In the case when a functional $\Phi$ is lower semicontinuous we
investigate the ``convergence rate" in the limit \eqref{eq-bar}
(cf. \cite{EPZ}). For every $\epsilon >0$ put
$$\overline{\Phi}_{\epsilon}(F,G)= \inf_{||F-F'||\leq \epsilon, ||G'-G|| \leq
\epsilon} \Phi(F',G')\;.$$ We are interested in  upper bounds for
the difference $$ \Phi(F,G) - \overline{\Phi}_{\epsilon}(F,G)$$ in
terms of $F,G$ and $\epsilon$ as $\epsilon \to 0$. L.Buhovsky
\cite{Buh} discovered such a (sharp!) upper bound for the functional
$\Phi(F,G) = \max\{F,G\}$. To state this result, we put
\begin{equation}\label{eq-psi}
\Psi(F,G):= ||\{\{\{F,G\},F\},F\} + \{\{\{F,G\},G\},G\}||\;.
\end{equation} One can show that $\Psi(F,G) >0$ provided
$\{F,G\}\neq 0$ (see Corollary~\ref{cor-FGFF}  below). With this
notation, Buhovsky derived the following $2/3$-law: There exists
$\epsilon_0(F,G)>0$ such that for any $0< \epsilon <
\epsilon_0(F,G)$
\begin{equation}\label{eq-main}\Phi(F,G) - \overline{\Phi}_{\epsilon}(F,G) \leq C \cdot
\Psi(F,G)^{\frac13}\epsilon^{\frac23}\;,
\end{equation} where $C>0$ is a numerical constant. Furthermore,
Buhovsky showed that his estimate captures sharp asymptotics in
$\epsilon$.  Buhovsky's proof of \eqref{eq-main} is based on an
ingenious application of the energy-capacity inequality. In
Section~\ref{sec-buhlaw} we reprove \eqref{eq-main} by our
methods.

\medskip
\noindent For the case of the double bracket we have the following
estimate of the convergence rate:

\medskip
\noindent
\begin{thm}[Convergence rate]\label{thm-rate} Let
$$\Phi(F,G) = \max\{\{F,G\},F\} +
\max\{\{F,G\},G\}\;.$$ Then for every $\epsilon >0 $
\begin{equation}\label{eq-main-doub}\Phi(F,G) - \overline{\Phi}_{\epsilon}(F,G)\leq C(F,G)\cdot\epsilon^{\frac13}\;,
\end{equation}
where $C(F,G)$ is a positive constant depending on $F$ and $G$.
\end{thm}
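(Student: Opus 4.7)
My plan is to mimic, one derivative-order higher, the strategy used in Section~\ref{sec-buhlaw} to reprove Buhovsky's $2/3$-law \eqref{eq-main}. Writing $\Phi = A + B$ with $A(F,G) := \max\{\{F,G\},F\}$ and $B(F,G) := \max\{\{F,G\},G\}$, the $F\leftrightarrow G$ symmetry reduces matters to establishing the quantitative lower-semicontinuity bound
$$A(F',G') \geq A(F,G) - C_1(F,G)\,\epsilon^{1/3} \qquad(\|F-F'\|,\ \|G-G'\|\leq\epsilon),$$
together with its $B$-analogue; summing the two yields the theorem with $C(F,G) = C_1(F,G)+C_2(F,G)$.

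The main algebraic ingredient I would use is the \emph{third-order} Kolmogorov inequality
$$\|\gamma''\|_\infty \leq C_0\,\|\gamma\|_\infty^{1/3}\,\|\gamma'''\|_\infty^{2/3}\qquad(\gamma\in C^3(\R)),$$
which plays the role that the second-order Landau--Hadamard inequality (underlying Proposition~\ref{thm-double-bracket}) plays in the proof of the $2/3$-law. Applied along the orbit $\gamma(t)=G(\phi_F^t(x_0))$, for which $\gamma''=-\{\{F,G\},F\}\circ\phi_F^t$ and $\gamma'''=-\{\{\{F,G\},F\},F\}\circ\phi_F^t$, it yields the Poisson--Kolmogorov inequality
$$\|\{\{F,G\},F\}\|\leq C_0\,(\osc G)^{1/3}\,\|\{\{\{F,G\},F\},F\}\|^{2/3},$$
and, symmetrically, with $F\leftrightarrow G$. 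The H\"older exponent $2/3$ on the triple bracket, combined with an $\epsilon$ input from the Hofer step described below, is exactly what produces the final rate $\epsilon^{1/3}$ (just as second-order LH combined with $\epsilon$ produces $\epsilon^{2/3}$ in Buhovsky's single-bracket case).

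To upgrade the pointwise Kolmogorov inequality to a $C^0$-perturbation statement, I would rerun the energy-capacity argument of Section~\ref{sec-buhlaw} but with the second-order commutator $[\phi_F^s,\phi_G^t]=\phi_F^s\phi_G^t\phi_F^{-s}\phi_G^{-t}$ refined by a Baker--Campbell--Hausdorff--type expansion that isolates the double brackets $\{\{F,G\},F\}$ and $\{\{F,G\},G\}$ from the already-controlled single-bracket term $\{F,G\}$. An assumed drop $A(F,G)-A(F',G')>\delta$ then forces, via energy-capacity applied to this refined commutator together with the Poisson--Kolmogorov inequality above, the bound $\delta \leq C\,\Psi(F,G)^{2/3}\,\epsilon^{1/3}$. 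The main obstacle is precisely this higher-order commutator bookkeeping: setting up the refined commutator, applying the energy-capacity inequality, and tracking constants so that the algebra produces the cube-root (rather than a worse) exponent. Once this is in place, the Kolmogorov inequality plugs in the way Landau--Hadamard does in Buhovsky's proof, and the constant $C(F,G)$ in the theorem turns out to be a polynomial expression in $\osc F$, $\osc G$, and the triple-bracket norm $\Psi(F,G)$.
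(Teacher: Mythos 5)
There is a fundamental gap: your opening decomposition cannot work. You propose to prove the two \emph{separate} estimates
$$A(F',G') \geq A(F,G) - C_1(F,G)\,\epsilon^{1/3}, \qquad B(F',G') \geq B(F,G) - C_2(F,G)\,\epsilon^{1/3},$$
where $A(F,G)=\max\{\{F,G\},F\}$ and $B(F,G)=\max\{\{F,G\},G\}$, and then add them. But the first of these, if true, would make $A$ lower semicontinuous in the $C^0$-topology (let $\epsilon\to 0$), and this is precisely what the paper's own Theorem~\ref{thm-no-semicontinuity} (and Theorem~\ref{thm-main-1}(i)) rules out: $\max\{\{F,G\},F\}$ by itself is only weakly robust, \emph{not} lower semicontinuous. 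The whole point of Theorem~\ref{thm-rate} is that only the \emph{sum} $\max\{\{F,G\},F\}+\max\{\{F,G\},G\}$ enjoys this stronger property, and the proof must engage with both summands simultaneously. The $F\leftrightarrow G$ symmetry you invoke is real, but it does not let you decouple $A$ from $B$.

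The paper's actual proof is designed around this exact obstruction. It constructs a single group element $\theta(F,G)=[\phi_{-F}\phi_{-G},\phi_{F+G}]$ whose Hofer seminorm $\rho$ is bounded from above by $\tfrac12(\max\{\{F,G\},F\}+\max\{\{F,G\},G\})$ (Lemma~\ref{lemma-1}), and whose rescaled version $\theta(sF,tG)$ is conjugate to a flow generated by a Hamiltonian expanding as $s^2 t A + st^2 B + O(\text{higher order})$. McDuff's minimality of short 1-parameter subgroups (not an energy--capacity argument) gives the matching lower bound in terms of $\max(s^2 t A + st^2 B)$. Crucially, this controls the max of a \emph{weighted sum} of $A$ and $B$, not $\max A + \max B$ individually; the paper bridges this gap (see Remark~\ref{rem-sharpness}) by giving $s$ and $t$ \emph{different} scalings $s=\epsilon^{1/6}$, $t=\epsilon^{1/2}$, and it is exactly this imbalance that produces the final exponent $1/3$. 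Your proposal would need a mechanism of this kind; neither the third-order Kolmogorov inequality (which the paper does not use here at all) nor a per-function argument can substitute for it. The Kolmogorov-type inequalities appear in the paper only for weak robustness (Section~\ref{sec-disc}), not in the convergence-rate proof.
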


\medskip
\noindent The proof is given in Section~\ref{sec-rate-double}
below. It is unclear to us whether the theorem above can be
improved:

\medskip
\noindent \begin{question}\label{quest-power} Is the power law
$\epsilon^{\frac13}$ in inequality \eqref{eq-main-doub}
asymptotically sharp as $\epsilon \to 0$?
\end{question}

\medskip
\noindent We return to this question in Remark~\ref{rem-sharpness}
below.

\medskip
\noindent As an immediate consequence of inequality
\eqref{eq-main-doub} we get that the functional $\Phi$ is lower
semicontinuous. With a little extra work, we deduce from this
Theorem~\ref{thm-main-1}(ii), see Section~\ref{sec-end}.

\medskip
\noindent For the proof of Theorem~\ref{thm-rate}  we use the
approach initiated in \cite{EP-Poisson1} which is based on the
following ingredient from ``hard" symplectic topology: Denote by
$\Hamc (M)$ the group of Hamiltonian diffeomorphisms of $M$
generated by Hamiltonian flows with compact support. Then
sufficiently small segments of one-parameter subgroups of the
group $\Hamc (M)$ of Hamiltonian diffeomorphisms of $M$ minimize
the ``positive part of the Hofer length" among all paths on the
group in their homotopy class with fixed end points. This was
proved  by D.McDuff in \cite[Proposition 1.5]{McD-variants} for
closed manifolds and in \cite[Proposition 1.7]{McD-monodromy} for
open ones; see also \cite{Bialy-Pol}, \cite{Lal-McD}, \cite{En},
\cite{McD-Slim}, \cite{KL}, \cite{Oh} for related results in this
direction. In fact, our method readily generalizes to any (in
general, ``infinite-dimensional") Lie group equipped with a
bi-invariant (Finsler) semi-norm, provided sufficiently short
segments of 1-parameter subgroups are minimal geodesics. It would
be interesting to formalize this remark and to find new
significant examples.

The results discussed above can be viewed as a symplectic
counter-part of the following classical problem of approximation
theory: find the best uniform approximation of a given function
(say, of a periodic function of one real variable) by functions
with given bounds on derivatives. This problem was solved in the
1960s, see Sections 6.2.1 and 7.2.3 of Korneichuk's book
\cite{Korneichuk} and the references therein. For instance, one
can extract from Korneichuk's results that the functional taking a
smooth periodic function $u$ on $\R$ to the uniform norm of its
derivative is lower semicontinuous in the uniform norm and obeys
the $2/3$-law (see \cite{Buh} for a direct proof and some
generalizations). It would be interesting to explore further the
connection between approximation theory and function theory on
symplectic manifolds.

\medskip
\noindent{\sc Organization of the paper.} In Section~\ref{sec-LH}
we prove a version of the Landau-Hadamard inequality for the
double Poisson bracket and thus complete the proof of weak
robustness of the functional $\max \{\{F,G\},F\}$. In Section
\ref{sec-Hofer}, after recalling some preliminaries on Hofer's
geometry, we give a new proof of Buhovsky's 2/3-law
\eqref{eq-main} for the ordinary Poisson bracket and prove the
Convergence Rate Theorem for the double bracket. In Section
\ref{sec-oscFGG-example} we construct an example which in
particular shows that the functional $\osc\,\{\{F,G\},F\}$ is not
lower semicontinuous. The proof of the Dichotomy Theorem is
completed in Section~\ref{sec-end}. Finally, in Section
\ref{sec-disc} we present some generalizations of our results on
weak robustness to higher iterated Poisson brackets and formulate
open problems.

\medskip
\noindent {\bf Convention on the Poisson bracket:} Our convention
concerning  the Poisson bracket is as follows: Given a Hamiltonian
function $G\in C^\infty_c (M)$, its Hamiltonian vector field,
denoted by $sgrad\,G$, is defined by the condition $dG (\cdot) :=
\omega (\cdot,sgrad\, G )$.  The Poisson bracket of a pair of
functions $F,G\in C^\infty_c (M)$ is then defined by
$$\{ F,G\} := \omega (sgrad\,G, sgrad\,F)= dF (sgrad\, G) = \frac{d}{dt}\Big{|}_{t=0} F \circ
g_t\;,$$ where $g_t$ is the Hamiltonian flow generated by $G$.

\section{The Landau-Hadamard inequality}\label{sec-LH}

We shall need the following version of the classical Landau-Hadamard
inequality.

\medskip
\noindent
\begin{prop}\label{ineq-lh}
Let $u$ be a non-constant twice differentiable function on $\R$
which is bounded with its two derivatives. Assume that $|u'|$
attains its maximal value. Then
\begin{equation}\label{ineq-hadam}
\sup u'' \geq \frac{||u'||^2}{2\cdot \osc\,u}\;.
\end{equation}
\end{prop}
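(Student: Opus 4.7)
The plan is a short geometric argument that extracts a large oscillation of $u$ from the hypothesis that $|u'|$ attains its maximum at some point $x_0 \in \R$, using only the upper bound $A := \sup u''$.

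First, I would reduce to the case $u'(x_0) = M$ where $M := \|u'\| > 0$; the case $u'(x_0) = -M$ is handled either by the symmetric argument on the right of $x_0$, or by passing to $v(x) := u(-x)$ which preserves both $\osc\,u$ and $\sup u''$. Non-constancy of $u$ gives $M > 0$.

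Next I would argue that $A > 0$. If instead $A \leq 0$, then $u'$ is non-increasing on $\R$. Combined with $u'(x_0) = M$ and $\|u'\| = M$, this forces $u'(x) \equiv M$ on $(-\infty, x_0]$, hence $u(x) = u(x_0) - M(x_0 - x) \to -\infty$ as $x \to -\infty$, contradicting boundedness of $u$. So in what follows I assume $0 < A < \infty$ (if $A = +\infty$ the inequality is trivial).

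The core estimate is then a linearization. For any $x \leq x_0$,
\begin{equation*}
u'(x_0) - u'(x) \;=\; \int_x^{x_0} u''(t)\,dt \;\leq\; A(x_0 - x),
\end{equation*}
so $u'(x) \geq M - A(x_0 - x)$. Setting $\delta := M/A$, the right-hand side is non-negative on the interval $[x_0 - \delta, x_0]$, hence $u$ is non-decreasing there and
\begin{equation*}
\osc\,u \;\geq\; u(x_0) - u(x_0 - \delta) \;\geq\; \int_{x_0-\delta}^{x_0} \bigl(M - A(x_0 - x)\bigr)\,dx \;=\; M\delta - \tfrac{1}{2}A\delta^2 \;=\; \frac{M^2}{2A}.
\end{equation*}
Rearranging gives $\sup u'' = A \geq \|u'\|^2/(2\,\osc\,u)$, which is \eqref{ineq-hadam}.

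The only mildly subtle step is ruling out $A \leq 0$; this is where the boundedness of $u$ (rather than of $u'$) enters. The remainder is a one-line application of the fundamental theorem of calculus followed by integration against the linear lower bound on $u'$.
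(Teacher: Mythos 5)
Your proof is correct and takes essentially the same route as the paper: both use the quadratic (Taylor-type) lower bound $\osc\,u \geq M\delta - \tfrac12 A\delta^2$ obtained by integrating the linear estimate $u'(x) \geq M - A(x_0-x)$, and then optimize by choosing $\delta = M/A$. The paper phrases it as a Taylor expansion with integral remainder at the point where $|u'|$ is extremal and picks $t = -u'(0)/A$ (handling both signs of $u'(0)$ at once), while you reduce explicitly to $u'(x_0)=M>0$ and spell out the argument that $A>0$, but the underlying estimate is identical.
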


\medskip
\noindent \begin{proof} Assume without loss of generality that $
||u'||=|u'(0)|$. Denote $A = \sup u''$. For every $t$
\begin{equation}\label{eq-vsp-had}
\osc\, u \geq u(0)-u(t) =  -tu'(0) - \int_0^t ds \int_0^s u''(z)\;dz
\geq -tu'(0)-At^2/2\;.
\end{equation}
Note that  $A> 0$, otherwise $u$ is either constant or unbounded.
Substituting $t = -u'(0)/A$ into \eqref{eq-vsp-had} we get
inequality \eqref{ineq-hadam}.
\end{proof}

\medskip
\noindent{\bf Proof of Proposition~\ref{thm-double-bracket}}: Take
a point $x \in M$ so that $|| \{F,G\}||= \{F,G\}(x)$. Denote by
$f_t$ the Hamiltonian flow of $F$ and put $u(t):= -G(f_t x)$. Then
$u'(t)= \{F,G\}(f_tx)$ and $u''(t) = \{\{F,G\},F\}(f_tx)$. Note
that
$$\sup u'' \leq \max \{\{F,G\},F\},\; || u'||=|| \{F,G\}||,\; \osc\; u \leq
\osc \,G\;.$$ Applying inequality \eqref{ineq-hadam} to $u$ we get
that
$$\max \{\{F,G\}, F\} \geq \frac{ ||\{ F, G\}||^2 }{2\cdot\osc\, G}\;,$$
as required. \qed

\medskip
\noindent
\begin{cor}\label{cor-FGFF} Let $F,G \in C^{\infty}_c(M)$ be a pair
of functions with $\{F,G\} \not\equiv 0$. Then
$$I(F,G):=\{\{\{F,G\},F\},F\} + \{\{\{F,G\},G\},G\} \not\equiv
0\;.$$
\end{cor}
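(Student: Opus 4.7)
The plan is to set $H := \{F, G\}$ and to pair $I(F, G)$ against $H$ itself in $L^2(M, \omega^n)$. Since $F, G \in C^\infty_c(M)$, all iterated brackets appearing are compactly supported, so the integrals below converge unconditionally, in both the closed and the open case.

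The main tool I would use is the integration-by-parts identity
\[
\int_M \{A, B\}\cdot C\, \omega^n \;=\; -\int_M A\cdot\{C, B\}\, \omega^n,
\]
valid for compactly supported $A, B, C$ because the Hamiltonian vector field $\sgrad B$ is divergence-free with respect to $\omega^n$. Applying this identity with $(A, B, C) = (\{H, F\}, F, H)$, and then again with $F$ replaced by $G$, and summing the two identities, one obtains
\[
\int_M I(F, G)\cdot H\, \omega^n \;=\; -\|\{H, F\}\|_{L^2}^2 \;-\; \|\{H, G\}\|_{L^2}^2.
\]

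To force the right-hand side to be strictly negative I would invoke Proposition~\ref{thm-double-bracket}. Since $\{F, G\} \not\equiv 0$, in particular $G \not\equiv 0$ and $\|\{F, G\}\| > 0$, so the Landau-Hadamard inequality yields $\max\{\{F, G\}, F\} > 0$, whence $\{H, F\} \not\equiv 0$ and $\|\{H, F\}\|_{L^2} > 0$. The pairing is therefore strictly negative, and in particular $I(F, G) \not\equiv 0$.

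The main obstacle I anticipated was that a purely pointwise attempt does not succeed: evaluating $I$ at a maximum of $H$ only tells us, via negative semi-definiteness of the Hessian of $H$, that both triple brackets are $\leq 0$ there, and each can vanish individually at that point without $H$ being zero. The integration-by-parts trick sidesteps this by replacing the pointwise non-vanishing question with a global $L^2$-correlation that cannot accidentally cancel, once we know from Landau-Hadamard that at least one of $\{H, F\}, \{H, G\}$ is not identically zero.
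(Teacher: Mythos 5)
Your proof is correct and is essentially the paper's proof: the same integration-by-parts identity yields the same $L^2$-pairing $\int_M I(F,G)\cdot\{F,G\}\,\omega^n = -\|\{\{F,G\},F\}\|_{L^2}^2 - \|\{\{F,G\},G\}\|_{L^2}^2$, and both arguments then invoke Proposition~\ref{thm-double-bracket} to rule out vanishing. The only cosmetic difference is that the paper argues by contrapositive (if $I\equiv 0$ then $\{\{F,G\},F\}\equiv 0$, hence $\{F,G\}\equiv 0$), while you run the same chain forward.
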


\begin{proof}
We shall use the following classical identity which readily
follows from the definition of the Poisson bracket and the Stokes
formula:
$$\int_M \{P,Q\}R\;\omega^n= \int_M \{R,P\}Q\;\omega^n$$
for all functions $P,Q,R \in C^{\infty}_c(M)$. This implies that
$$\int_M I(F,G) \cdot \{F,G\} \;\omega^n = -\int_M
\{\{F,G\},F\}^2+\{\{F,G\},G\}^2 \;\omega^n\;.$$ If $I(F,G)\equiv 0$
we have that $\{\{F,G\},F\}\equiv 0$. By Proposition
\ref{thm-double-bracket} this yields $\{F,G\}=0$. \end{proof}

\section{Poisson bracket via Hofer's geometry}\label{sec-Hofer}

\subsection{Preliminaries}

\noindent Let $(M,\omega)$ be a connected symplectic manifold of
dimension $2n$. Write $\tHam (M)$ for the universal cover of the
group $\Ham(M)$ of Hamiltonian diffeomorphisms of $(M,\omega)$,
where the base point is chosen to be the identity map $\id$.
Denote by $\phi_H^t \in \tHam(M)$, $t\in\R$, the lift of the
Hamiltonian flow generated by a (time-dependent) Hamiltonian $H$.
We set $\phi_H :=\phi_H^1$ and say that $\phi_H$ {\it is generated
by} $H$. If $H$ is time-independent, we often abbreviate $\phi_H^t
= h_t$.

\noindent If $F$ is a function on $M$ and $\phi \in \tHam(M)$, we
(by a slight abuse of notation) write $F \circ \phi$ for the
composition of $F$ with the projection of $\phi$ to $\Ham(M)$.

\noindent For a time-dependent Hamiltonian $H(x,t)$ we  set $H_t =
H(\cdot, t)$.

\noindent Denote by $\cF$ the set of all the Hamiltonians $H$ on $M$
such that
\begin{itemize}
\item if $M$ is open, the union of supports of $H_t$, $t \in
[0,1]$, is compact; \item if $M$ is closed, $H_t$ has zero mean
for all $t$: $\int_M H_t \;\omega^{n} = 0$.
\end{itemize}

\noindent The group $\tHam(M)$ carries a conjugation-invariant
functional $\rho$ (called the ``positive part" of the Hofer's norm)
defined by
\[\rho(\phi) :=
\inf_H \int_0^1 \max_{x \in M} H(x,t)\;dt\;,
\]
where the infimum is taken over all (time-dependent) Hamiltonians
$H\in\cF$ generating $\phi$.

\noindent We shall often use the following well known properties
of the functional $\rho$ which readily follow from the definition.

\begin{prop}\label{prop-dob}
\begin{itemize}
\item[{(i)}] (conjugation invariance)
$\rho(\psi\phi\psi^{-1})=\rho(\phi)$ for all $\phi,\psi \in \tHam
(M)$. \item[{(ii)}] (triangle inequality) $ \rho(\phi\psi) \leq
\rho(\phi)+\rho(\psi)$ for all $\phi,\psi \in \tHam(M)$.
\item[{(iii)}] $\rho(\phi_F^{-1}\phi_G) \leq \int_0^1 \max(G_t
-F_t)\;dt\;$ for all $F,G \in \cF$.
\end{itemize}
\end{prop}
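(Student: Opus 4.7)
The three assertions are all formal consequences of the standard composition formulae for time-dependent Hamiltonian flows, together with the tautology $\max_x (K\circ\theta) = \max_x K$ for any diffeomorphism $\theta$ of $M$. I would record three formulae upfront: (a) $\phi_F^t\phi_G^t$ is generated by $F_t + G_t\circ(\phi_F^t)^{-1}$; (b) $(\phi_F^t)^{-1}\phi_G^t$ is generated by $(G_t - F_t)\circ \phi_F^t$; and (c) $\psi\phi_F^t\psi^{-1}$ is generated by $F_t\circ\psi^{-1}$ for any symplectomorphism $\psi$. Each follows in a few lines from the chain rule and the identity $\psi_*\sgrad H = \sgrad(H\circ\psi^{-1})$ valid for symplectic $\psi$. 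One should also check once and for all that the constructed Hamiltonians remain in $\cF$: compactness of support is preserved under composition with a Hamiltonian diffeomorphism, and on closed $M$ the zero-mean condition is preserved because Hamiltonian diffeomorphisms preserve $\omega^n$.

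Property (i) then follows from (c): since $\max(F_t\circ\psi^{-1}) = \max F_t$, any admissible $F$ generating $\phi$ produces an admissible Hamiltonian of the same positive Hofer length generating $\psi\phi\psi^{-1}$, so $\rho(\psi\phi\psi^{-1})\le \rho(\phi)$; the reverse inequality comes from conjugating by $\psi^{-1}$. Property (ii) follows from (a): for any $F$ and $G$ generating $\phi$ and $\psi$, the Hamiltonian $F_t + G_t\circ(\phi_F^t)^{-1}$ generates $\phi\psi$ and satisfies $\int_0^1 \max(F_t + G_t\circ(\phi_F^t)^{-1})\,dt\le \int_0^1 \max F_t\,dt + \int_0^1\max G_t\,dt$; taking the infimum separately over $F$ and $G$ yields the triangle inequality.

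For (iii) I would invoke (b): the smooth path $t\mapsto(\phi_F^t)^{-1}\phi_G^t$ in $\Ham(M)$ starts at $\id$ and, under the pointwise-product realization of the group law on $\tHam(M)$, represents the element $\phi_F^{-1}\phi_G\in\tHam(M)$. By (b) it is generated by $(G_t - F_t)\circ\phi_F^t$, whose maximum at each time equals $\max(G_t - F_t)$; integrating and applying the definition of $\rho$ gives the bound. The only mildly delicate point in the whole argument is this identification of paths, namely verifying that the pointwise-product path in $\tHam(M)$ really represents $\phi_F^{-1}\phi_G$; this is a standard fact about universal covers of topological groups (the two natural group laws coming from pointwise product and from path concatenation agree up to homotopy) and presents no real obstacle.
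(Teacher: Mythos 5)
The paper does not actually supply a proof of Proposition~\ref{prop-dob}; it simply remarks that the three properties ``readily follow from the definition'' of $\rho$. Your proof is the standard way of filling in those details, and it is correct. The three composition formulae (a)--(c) are exactly right under the paper's sign conventions (one checks $\psi_*\sgrad H=\sgrad(H\circ\psi^{-1})$ from $\iota_{\sgrad H}\omega=-dH$), the verification that the constructed Hamiltonians stay in $\cF$ is the right thing to check (compactness of support, respectively zero mean, is preserved since the relevant diffeomorphisms are compactly supported symplectomorphisms), and the ``mildly delicate point'' you flag — that the pointwise-product path $t\mapsto(\phi^t_F)^{-1}\phi^t_G$ represents the group element $\phi_F^{-1}\phi_G$ in the universal cover — is indeed the only place where something about $\tHam(M)$ as opposed to $\Ham(M)$ is used, and your appeal to the agreement of the two group laws on $\pi_1$-level is the standard resolution. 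One small remark on (i): the conjugating element $\psi$ lives in $\tHam(M)$, so strictly speaking one first notes that conjugation by $\psi$ depends only on its projection $\bar\psi\in\Ham(M)$ (inner automorphisms by elements of the identity component lift canonically), and then applies your formula (c) with $\bar\psi$; you implicitly do this and it is fine. No gaps.
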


\medskip
\noindent Combining items (ii) and (iii) of the proposition, we
get that
\begin{equation}\label{ineq-diff}
|\rho(\phi_F)-\rho(\phi_G)| \leq
\max(\rho(\phi_F^{-1}\phi_G),\rho(\phi_G^{-1}\phi_F) )\leq
\int_0^1 ||F_t -G_t||\;dt\;.
\end{equation}

\medskip
\noindent Let us illustrate the conjugation invariance of $\rho$
and the triangle inequality by proving the following lemma which
will be useful in the sequel.

\medskip
\noindent {\bf Convention on commutators:} We write $[\phi,\psi]$
for the commutator \break $\phi\psi\phi^{-1}\psi^{-1}$ of elements
$\phi,\psi \in \tHam(M)$.

\medskip
\noindent
\begin{lemma}\label{lem-alg} For every elements $a,b,c,d \in
\tHam(M)$
\begin{equation}\label{eq-lem-alg}
|\rho([a,b])-\rho( [c,d])| \leq \rho(a^{-1}c) + \rho(c^{-1}a) +
\rho(b^{-1}d) + \rho(d^{-1}b)\;.
\end{equation}
\end{lemma}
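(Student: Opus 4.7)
\medskip
\noindent \textbf{Proof proposal for Lemma~\ref{lem-alg}.}

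The plan is to reduce the full four-variable comparison to two separate one-variable changes and then combine them via the triangle inequality for $\rho$. Specifically, by Proposition~\ref{prop-dob}(ii) applied twice,
\begin{equation*}
|\rho([a,b])-\rho([c,d])| \leq |\rho([a,b])-\rho([c,b])|+|\rho([c,b])-\rho([c,d])|,
\end{equation*}
so it suffices to prove the two estimates
\begin{equation*}
|\rho([a,b])-\rho([c,b])| \leq \rho(a^{-1}c)+\rho(c^{-1}a), \qquad |\rho([c,b])-\rho([c,d])| \leq \rho(b^{-1}d)+\rho(d^{-1}b).
\end{equation*}

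For the first of these I would introduce $u := c^{-1}a$, so that $a=cu$, and compute directly:
\begin{equation*}
[a,b]\cdot [c,b]^{-1}=(cu)b(cu)^{-1}b^{-1}\cdot bcb^{-1}c^{-1}=c(ubu^{-1})c^{-1}\cdot cb^{-1}c^{-1}=c\,[u,b]\,c^{-1}.
\end{equation*}
The conjugation invariance (Proposition~\ref{prop-dob}(i)) then gives $\rho([a,b][c,b]^{-1})=\rho([u,b])$. Writing $[u,b]=u\cdot(bu^{-1}b^{-1})$ and applying Proposition~\ref{prop-dob}(ii) together with conjugation invariance to the second factor, we get
\begin{equation*}
\rho([u,b])\leq \rho(u)+\rho(bu^{-1}b^{-1})=\rho(u)+\rho(u^{-1})=\rho(c^{-1}a)+\rho(a^{-1}c).
\end{equation*}
Combining with Proposition~\ref{prop-dob}(ii) yields $\rho([a,b])-\rho([c,b])\leq \rho(a^{-1}c)+\rho(c^{-1}a)$. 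The reverse inequality follows by interchanging the roles of $a$ and $c$ (the right-hand side is symmetric in $a\leftrightarrow c$).

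The second estimate is established analogously: set $v:=b^{-1}d$, so $d=bv$, and verify by the same pattern of manipulations that $[c,d]\cdot[c,b]^{-1}$ is conjugate to $[c,v]^{-1}$ (or alternatively that $[c,b][c,d]^{-1}$ is conjugate to a commutator with $v$ or $v^{-1}$), and then bound its $\rho$ by $\rho(v)+\rho(v^{-1})=\rho(b^{-1}d)+\rho(d^{-1}b)$. The only step requiring any real care is the algebraic identification of $[a,b][c,b]^{-1}$ with the conjugate of a single commutator $[u,b]$; once that identity is in hand, the rest is bookkeeping with Proposition~\ref{prop-dob}. No hard symplectic input is needed for this lemma—only the abstract properties of $\rho$ as a conjugation-invariant semi-norm.
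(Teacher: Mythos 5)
Your proof is correct, and it takes a genuinely different route from the paper's. The paper works on both slots at once: it sets $e:=[a,b]^{-1}[c,d]$, observes that $e$ is conjugate to $fg$ with $f=b^{-1}a^{-1}cd$ and $g=c^{-1}d^{-1}ba$, shows each of $f,g$ is in turn conjugate to a product of two ``error terms,'' bounds $\rho(e)$ and (by the symmetric argument) $\rho(e^{-1})$ by the right-hand side, and finishes via $|\rho([a,b])-\rho([c,d])|\le\max(\rho(e),\rho(e^{-1}))$, which is the left inequality in~\eqref{ineq-diff}. You instead interpolate through $[c,b]$, changing one slot at a time, and for each one-slot change you use the clean identity $[a,b][c,b]^{-1}=c\,[c^{-1}a,\,b]\,c^{-1}$, from which $\rho([a,b][c,b]^{-1})\le\rho(c^{-1}a)+\rho(a^{-1}c)$ is immediate. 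Both arguments use only conjugation invariance and the triangle inequality for $\rho$; yours is a bit more modular and the key algebraic identity is more transparent than the paper's factorization of $e$, at the small cost of having to run the one-slot argument twice (and, since $\rho$ is not inversion-invariant, of checking that the second-slot computation really does mirror the first — which it does: $[c,b]^{-1}[c,d]=b\,[c,\,b^{-1}d]\,b^{-1}$). Two minor points worth tidying: the first step is the ordinary triangle inequality for absolute value on $\R$, not Proposition~\ref{prop-dob}(ii); and your parenthetical guess that $[c,d][c,b]^{-1}$ is conjugate to $[c,v]^{-1}$ is off (it is conjugate to $vc^{-1}v^{-1}c$), but since you explicitly hedged with an alternative and the needed bound $\rho(v)+\rho(v^{-1})$ follows either way, this does not affect the argument.
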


\medskip
\noindent
\begin{proof} Set $e:=[a,b]^{-1}[c,d]$. Note that $e$ is conjugate to $fg$ with
$f= b^{-1}a^{-1}cd, g= c^{-1}d^{-1}ba$. In turn, $f$ is conjugate
to $(a^{-1}c)(db^{-1})$ and $g$ is conjugate $(d^{-1}b)(ac^{-1})$.
Using the conjugation invariance of $\rho$ and the triangle
inequality we conclude that $\rho(e)$ does not exceed the
right-hand side of \eqref{eq-lem-alg}. A similar analysis proves
the same bound for $\rho(e^{-1})$. But the left inequality in
\eqref{ineq-diff} shows that the left-hand side of
\eqref{eq-lem-alg} does not exceed $\max(\rho(e),\rho(e^{-1}))$,
and thus inequality \eqref{eq-lem-alg} follows.
\end{proof}

\medskip
\noindent The key Hofer-geometric ingredient used in the proofs
below is as follows (McDuff, \cite[Proposition1.5]{McD-variants}):
 for every {\it time-independent} function $H \in \cF$ there
exists $\delta > 0$ so that
\begin{equation}\label{eq-geodes}
\rho(\phi_{tH}) = t\cdot \max H  \;\;\forall t \in (0,\delta)\;.
\end{equation}

\subsection{Buhovsky's $2/3$ law }\label{sec-buhlaw}

As a warm up we prove formula \eqref{eq-main}. Put
$$P:=\{F,G\},\ \ I:=\{\{P,F\},F\}+\{\{P,G\},G\}.$$ Note that $\Psi=||I||$. Recall
from Corollary~\ref{cor-FGFF} that $\Psi>0$. Consider the
Hamiltonian flow
$$v_\tau:= \phi_{\tau(F+G)/2}f_{-\tau}g_{-\tau}f_\tau g_\tau \phi_{-\tau(F+G)/2}\;.$$
A lengthy but straightforward calculation (which we checked by the
slightly modified Maple-based Lie Tools Package software
\cite{Lie-software}) shows that the corresponding Hamiltonian
$V(\tau):=V(x,\tau)$ has expansion
\begin{equation}\label{eq-expansion}
V(\tau) = 2\tau P + \frac{\tau^3}{6}I + O(\tau^4)\;.
\end{equation}
Let $t$ be a small parameter, and $\tau \in [0,1]$ be the time
variable. Consider the flow $v_{t\sqrt{\tau}}$ whose time one map
equals $v_t$. By \eqref{eq-expansion} this flow is generated by
the Hamiltonian
$$(2\sqrt{\tau})^{-1}tV(t\sqrt{\tau})= t^2P + R\;,$$
where $$R= \frac{t^4\tau}{12}I+ O(t^5)\;.$$ Fix $\delta > 0$. By
\eqref{ineq-diff} there exists $t_0 >0$ so that for every $ 0<t<t_0$
\begin{equation}\label{eq-buh-vsp-1}
|\rho(v_t)- \rho(\phi_{t^2P})| \leq  \frac{1+\delta}{24}\Psi \cdot
t^4\;.\end{equation} Decreasing if necessary $t_0$ we have from
\eqref{eq-geodes} that $\rho(\phi_{t^2P})=t^2 \max P$ for
$0<t<t_0$. Furthermore, $v_t$ is conjugate to $[f_t,g_t]$ and
hence $\rho(v_t)= \rho([f_t,g_t])$. Thus \eqref{eq-buh-vsp-1}
yields
\begin{equation}\label{eq-fund}
\rho([f_t,g_t]) \geq t^2\max\{F,G\} - \frac{1+\delta}{24}\Psi\cdot
t^4\;,
\end{equation}
for $0<t<t_0$.

Now put $K_{\delta} = \frac{1+\delta}{24}\Psi$. Choose a small
positive $\epsilon$ so that $(4\epsilon\cdot
K_{\delta}^{-1})^{1/3} \leq t_0$.
 Let $F',G'$ be functions with
$||F-F'||\leq \epsilon, ||G'-G|| \leq \epsilon$, and let
$f'_t,g'_t$ be the corresponding Hamiltonian flows. By an
elementary ODE Lemma 2.2 of \cite{EP-Poisson1}
\begin{equation}\label{eq-fund-1}
\rho([f'_t,g'_t]) \leq t^2\max\{F',G'\}\;,\;\;\text{for all}\;\;
t\;.
\end{equation}

Next we claim that
\begin{equation}\label{eq-straight}
|\rho([f'_t,g'_t])-\rho([f_t,g_t])| \leq 8 \epsilon t\;\;\text{for
all}\;\; t\;.
\end{equation}
When $M$ is an open manifold, this readily follows from
Lemma~\ref{lem-alg} and Proposition~\ref{prop-dob}(iii), and
actually we get the numerical constant $4$ instead of $8$ in
\eqref{eq-straight}. When $M$ is a closed manifold, we have to be
a bit more careful since the inequality in
Proposition~\ref{prop-dob}(iii) holds only for normalized
Hamiltonians. For any function $H \in C^{\infty}(M)$ define its
normalization by
$$H_{norm}:= H - \frac{1}{\text{Volume}(M)} \cdot \int_M H \;
\omega^n\;.$$ Clearly, for any two functions $H,H'$ we have
$$||H_{norm}-H'_{norm}|| \leq 2||H-H'||\;.$$
With this in mind, we again use Lemma~\ref{lem-alg} and apply
Proposition~\ref{prop-dob}(iii) to the normalizations of the
functions $tF,tF',tG,tG'$. This readily yields inequality
\eqref{eq-straight}.

Combining inequalities  \eqref{eq-fund},\eqref{eq-fund-1} and
\eqref{eq-straight} one gets that for all $t$, $0<t<t_0$,
$$\max\{F',G'\} \geq \max\{F,G\} -(8\epsilon t^{-1} + K_{\delta} t^2)\;,$$
and hence
$$\max\{F,G\}-\max\{F',G'\} \leq 8\epsilon t^{-1} + K_{\delta} t^2.$$
As a function of $t$, $t>0$ (for fixed $\epsilon$ and $K_\delta$),
the right-hand side reaches its minimum at $t = (4\epsilon\cdot
K_{\delta}^{-1})^{1/3}$ which, by our choice of $\epsilon$,
belongs to the interval $(0, t_0)$. Hence, we may substitute this
$t$ in the right-hand side which yields
$$\max\{F,G\}-\max\{F',G'\} \leq C (1+\delta)^{1/3}
\Psi^{1/3}\epsilon^{2/3}\;,$$ where $C$ is a numerical constant.
This immediately yields the desired formula \eqref{eq-main}. \qed

\subsection{Convergence rate for the double
bracket}\label{sec-rate-double} In this section we prove Theorem
\ref{thm-rate}. Throughout the proof we use notation
$$\theta(F,G) = [\phi_{-F} \phi_{-G}, \phi_{F+G}]\;.$$

\begin{lemma}
\label{lemma-1} For all $F,G \in C^{\infty}_c(M)$
$$\rho(\theta(F,G)) \leq (\max \{ \{
F,G\},F\} + \max \{\{ F,G\}, G\} ) / 2$$
\end{lemma}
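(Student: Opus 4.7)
The plan is as follows. First, simplify $\theta$ using the conjugation identity $\psi\phi_H\psi^{-1} = \phi_{H\circ\psi^{-1}}$ applied to $\psi = \phi_{-F}\phi_{-G}$ (whose inverse is $\phi_G\phi_F$), which collapses the commutator to a product of two autonomous time-one maps:
\[
\theta(F,G) \;=\; \phi_{(F+G)\circ \phi_G\phi_F}\cdot\phi_{-(F+G)}.
\]
Proposition~\ref{prop-dob}(iii) (equivalently, the elementary bound $\rho(\phi_A\phi_B)\le\max(A+B)$ for autonomous Hamiltonians $A,B$) then yields
\[
\rho(\theta) \;\le\; \max\bigl((F+G)\circ\phi_G\phi_F - (F+G)\bigr).
\]

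The heart of the argument is to show that this right-hand side --- which at first glance looks to be of single-bracket order, bounded only by $\osc\{F,G\}$ --- is in fact controlled by double brackets. Set $P := \{F,G\}$ and split the displacement at the intermediate point $\phi_F(x)$. Using the invariance $F\circ\phi_F = F$ and $G\circ\phi_G = G$, the displacement reduces to $\int_0^1 P\circ\phi_G^t\phi_F\,dt - \int_0^1 P\circ\phi_F^s\,ds$. Applying the fundamental theorem of calculus once more to each integrand and exchanging the order of integration (Fubini), the two $P(x)$ contributions cancel exactly, leaving the purely double-bracket expression
\[
\int_0^1 s\,\{P,F\}\circ\phi_F^s\,ds \;+\; \int_0^1 (1-t)\,\{P,G\}\circ\phi_G^t\phi_F\,dt.
\]
The claim then follows by taking suprema pointwise and using $\int_0^1 s\,ds = \int_0^1(1-t)\,dt = \tfrac{1}{2}$.

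The main obstacle --- and the reason a direct application of Proposition~\ref{prop-dob}(iii) by itself is insufficient --- is engineering the cancellation of the leading single-bracket terms $P(x)$. This is precisely what forces the choice of the intermediate splitting point $\phi_F(x)$ (rather than $x$ itself) and the subsequent second round of the fundamental theorem; without it, one only extracts a bound of single-bracket order.
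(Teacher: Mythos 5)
Your proof is correct and follows essentially the same route as the paper's: reduce $\rho(\theta(F,G))$ to a pointwise bound on the displacement $(F+G)\circ\phi_G\phi_F-(F+G)$, then apply the fundamental theorem of calculus twice to convert the displacement into a double-bracket expression with total weight $1/2$. Your opening reduction is a minor streamlining — by conjugating first and writing $\theta$ as the product $\phi_{(F+G)\circ\phi_G\phi_F}\,\phi_{-(F+G)}$ of two autonomous time-one maps, you invoke Proposition~\ref{prop-dob}(iii) directly instead of computing the time-dependent Hamiltonian of the commutator flow $[\phi_{-F}\phi_{-G},\phi_{\tau(F+G)}]$ and noting its maximum is $\tau$-independent, as the paper does; after the change of variables $y=\phi_F(x)$ and the cancellations $F\circ\phi_{-F}=F$, $G\circ\phi_G=G$, the quantity you bound is precisely the paper's $Y=G+F\circ\phi_G-G\circ\phi_{-F}-F$. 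One small inaccuracy in phrasing: after the second application of the fundamental theorem (expanding the first integral around $t=0$ and the second around $s=1$), the terms that cancel are the two copies of $P\circ\phi_F(x)$, not $P(x)$; the final display and the constant $1/2$ are nonetheless exactly right.
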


\begin{proof}
The diffeomorphism $\theta(F,G)$ can be generated by a Hamiltonian
flow $$[\phi_{-F} \phi_{-G}, \phi_{\tau(F+G)}]\;.$$ The
corresponding Hamiltonian is given by
$$H (x,\tau) = (F+G)\phi_G\phi_F -
(F+G)\phi_{-F}\phi_{-G}\phi_{-\tau (F+G)} \phi_G\phi_F.$$ Clearly,
$$\max_x H (x,\tau) = \max_x \big( F+G -
(F+G)\phi_{-F}\phi_{-G}\big) = \max_x \big( G + F\phi_G - G\phi_{-F}
- F\big).$$ Denote
$$Y  := G + F\phi_G -
G\phi_{-F} - F.$$ Then
\begin{equation}
\label{eqn-1} \rho (\theta(F,G)) \leq \max_x H = \max_x Y.
\end{equation}
On the other hand, it is easy to see that
$$Y(x) = \int_0^1 (\{ F, G\}\phi_{uG} + \{G,F\}\phi_{-uF}) du =$$
$$=\int_0^1\int_0^u (\{ \{ F, G\}, G\} \phi_{wG} + \{ \{ G,F\},
-F\}\phi_{-wF}) dw du \leq$$ $$\leq (\max_x \{ \{ F, G\}, G\} +
\max_x \{ \{ F, G\}, F\}) \int_0^1 \int_0^u dw du =$$ $$=(\max_x \{
\{ F, G\}, G\} + \max_x \{ \{ F, G\}, F\})/2.$$ Hence
$$\max_x Y \leq (\max_x \{ \{ F, G\}, G\} +
\max_x \{ \{ F, G\}, F\})/2.$$ By (\ref{eqn-1}) this implies
$$\rho (\theta(F,G)) \leq (\max_x \{
F,G\},G\} + \max_x \{ F,G\}, F\} ) / 2,$$ as needed.
\end{proof}

\medskip

Denote $A:= \frac12 \{ \{ F,G\},F\}$ and $B:=  \frac12 \{ \{
F,G\},G\}$.   Consider the flow $$v_{\tau} =
\phi_{\tau(F-G)/6}\circ \theta(\tau F,\tau G)\circ
\phi^{-1}_{\tau(F-G)/6}\;.$$ A lengthy but straightforward
calculation (which we checked by the slightly modified Maple-based
Lie Tools Package software \cite{Lie-software}) shows that the
corresponding Hamiltonian $V(\tau):=V(x,\tau)$ has expansion
\begin{equation}\label{eq-expansion-1}
V(\tau) = 3 \tau^2 (A+B) + \tau^4 Q  + O(\tau^5)\;,
\end{equation}
where $Q$ is a Lie polynomial of $F$ and $G$ whose monomials are
4-times-iterated Poisson brackets. Let $s,t$ be small parameters.
Replacing $F \to sF, G \to tG$ and making the change of time $\tau
\to \tau^{\frac13}$ we get that the element
$$u_{s,t}:= \phi_{(sF-tG)/6}\circ \theta(sF,tG)\circ \phi^{-1}_{(sF-tG)/6}\;$$
is generated by Hamiltonian $s^2t A + st^2B + R$ with
$$||R|| \leq E\cdot \sum_{i=1}^4 s^i t^{5-i}\;,$$
where $E$ is a constant depending on $F$ and $G$. Applying
\eqref{ineq-diff} and \eqref{eq-geodes} (here hard symplectic
topology enters the play) and taking into account that $u_{s,t}$ is
conjugate to $\theta(sF,tG)$ we get that for sufficiently small
$s,t>0$
\begin{equation}\label{eq-rate-vsp-1}
\rho(\theta(sF,tG))=  \rho(u_{s,t}) \geq \max(s^2tA +st^2B) - E\cdot
\sum_{i=1}^4 s^i t^{5-i}\;.
\end{equation}

\medskip

Take any $F',G'$ with $||F-F'||\leq \epsilon, ||G'-G|| \leq
\epsilon$. Put  $$A':= \frac12 \{ \{ F',G'\},F'\}, \ \ B':=
\frac12 \{ \{ F',G'\},G'\}.$$ By Lemma~\ref{lemma-1},
\begin{equation}\label{eq-d-vsp-1}
\rho(\theta(sF',tG') \leq s^2t \max A' + st^2 \max B'\;.
\end{equation}
Furthermore,
\begin{equation} \label{eq-d-vsp-2}
|\rho(\theta(sF,tG))-\rho(\theta(sF',tG'))| \leq 16\epsilon
(s+t)\;.
\end{equation}  The proof of this inequality is similar to
the proof of inequality \eqref{eq-straight} above: it readily
follows from Lemma~\ref{lem-alg} and
Proposition~\ref{prop-dob}(iii). We omit the details.

Combining inequalities \eqref{eq-rate-vsp-1}, \eqref{eq-d-vsp-1}
and \eqref{eq-d-vsp-2} we get that for all sufficiently small
$s,t>0$
\begin{equation}\label{eq-rate-vsp-2}
s^2t\max A' + st^2 \max B' \geq \max (s^2tA +st^2B) -E\cdot
\sum_{i=1}^4 s^i t^{5-i}-16\epsilon(s+t)\;.
\end{equation}
Put
\begin{equation}\label{eq-rate-delta}
2\Delta:= \max A + \max B - \max A' - \max B'\;.
\end{equation}
We have to find an upper bound on $\Delta$ assuming that $\Delta >
0$. Without loss of generality assume that $\max A - \max A' \geq
\Delta$.

Since $\Delta > 0$,  \eqref{eq-rate-delta} yields
\begin{equation}\label{eq-rate-vsp-3}
\max B' \leq ||A||+||B||\;.
\end{equation}
Further, $$\max (s^2tA +st^2B) \geq s^2t \max A - st^2 ||B||\;.$$
Substituting these inequalities into \eqref{eq-rate-vsp-2} we get
that
$$s^2t\max A'+st^2(||A||+||B||) \geq s^2t \max A - st^2 ||B||-E\cdot \sum_{i=1}^4 s^i t^{5-i}-16\epsilon(s+t)\;.$$
Since $\Delta\leq \max A - \max A'$, we conclude that for
sufficiently small $s$ and $t$
$$\Delta \leq Rs^{-2}t^{-1},$$
 where $R= (||A||+2||B||) st^2+E\cdot \sum_{i=1}^4 s^i
t^{5-i}+16\epsilon(s+t)$. Let us balance this inequality: we
choose $s = \epsilon^{\frac16}$, $t= \epsilon^{\frac12}$ (we
assume that $\epsilon$ was chosen sufficiently small so that the
last inequality is valid for these $s$ and $t$) and get that
$$\Delta \leq \text{const}(F,G) \cdot \epsilon^{\frac13}\;,$$
as required. \qed

\medskip
\noindent
\begin{rem}\label{rem-sharpness}{\rm Let us make the following manipulation with
inequality \eqref{eq-rate-vsp-2}: put $s=t$, divide by $t^3$ and
rewrite the inequality as follows:
\begin{equation}\label{eq-remark}
\max (A+B) -\max A' -\max B' \leq 4Et^2 +
16\frac{\epsilon}{t^2}\;.
\end{equation}
At first glance this is not too encouraging since we have to
estimate from above the quantity $ \max A+ \max B -\max A' -\max
B'\;,$ while in general $\max A+ \max B$ is {\it greater} than
$\max (A+B)$. As we have seen in the proof above, we bypassed this
difficulty by letting $s$ and $t$ to have different asymptotical
behavior in $\epsilon$. It might well happen that at this point we
lost sharpness in our bound $\sim \epsilon^{1/3}$ for the
convergence rate given in Theorem~\ref{thm-rate}, cf.
Question~\ref{quest-power} above. Here is some evidence in favor
of this possibility: assume for a moment that $\max A+ \max B =
\max (A+B)$. Putting $t = \epsilon^{1/4}$ in inequality
\eqref{eq-remark} we get that the right hand side is of the order
$\sim \epsilon^{1/2}$. At the same time it is easy to exhibit
examples of functions $F,G$ on the 2-sphere with, in notations of
Theorem~\ref{thm-rate},
$$\Phi(F,G) - \overline{\Phi}_{\epsilon}(F,G)\leq
C \cdot\epsilon^{\frac12}\;.$$ Thus $\sim \epsilon^{1/2}$ could be
considered as another candidate for the sharp power law in the
convergence rate.}
\end{rem}

\section{Decreasing
$\max \{\{F,G\},F\}$, $-\min\{\{F,G\},F\}$ }
\label{sec-oscFGG-example}

$\;$ \noindent In this section we prove the following result.

\medskip
\noindent
\begin{thm}\label{thm-no-semicontinuity}
On every symplectic manifold there exists a collection of functions
$F_N,F,G$ so that $F_N\stackrel{C^0}{\longrightarrow}F$ and
$$\liminf_{N \to \infty}\max\{\{F_N,G\},F_N\} < 0.99 \cdot\max
\{\{F,G\},F\}\;,$$
$$\liminf_{N \to \infty}(-\min\{\{F_N,G\},F_N\}) < -0.99\cdot \min
\{\{F,G\},F\}\;.$$
\end{thm}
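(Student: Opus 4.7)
My plan is to construct the example explicitly on the standard symplectic plane $(\R^2, dp\wedge dq)$, verify both inequalities there, and then transport it to an arbitrary symplectic manifold via a Darboux chart. Following the ansatz announced in the introduction, I take smooth compactly supported functions $F(p)$ and $G(q)$ (each a function of a single coordinate), normalized so that $F'\equiv 1$ on an interval $I$ containing all critical points of $G''$, together with a smooth compactly supported amplitude $a(q)$ to be chosen later. Set
$$F_N(p,q) := F(p) + \frac{a(q)}{N}\sin\bigl(NF(p)\bigr).$$
The estimate $\|F_N - F\|\le\|a\|/N$ is immediate, so the $C^0$-convergence hypothesis is satisfied for free.

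Next I compute $\{\{F_N, G\}, F_N\}$ directly. Because $G$ depends only on $q$ one has $\{F_N, G\} = -G'(q)\,\partial_p F_N$, and when the next Hamiltonian derivative is applied the terms that naively carry an explicit factor $N$ (coming from differentiating $\sin(NF)$) are always multiplied by $\partial_q F_N = \tfrac{1}{N}a'(q)\sin(NF(p))$; the $N$'s cancel and the identity $\sin^2+\cos^2 = 1$ contracts what remains into the explicit $O(1)$ formula
$$\{\{F_N, G\}, F_N\}(p, q) \;=\; -\bigl(F'(p)\bigr)^2\, D\bigl(q,\cos(NF(p))\bigr) + R_N, \qquad \|R_N\| = O(1/N),$$
where $D(q, c) := G''(q)\bigl(1+a(q)c\bigr)^2 + G'(q)\,a'(q)\bigl(a(q) + c\bigr)$ is a quadratic polynomial in $c\in[-1,1]$ that reduces to $G''(q)$ when $a\equiv 0$. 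On the subset of $I$ where $F'\equiv 1$ the phase $NF(p)$ equidistributes as $N\to\infty$, hence $\max \{\{F_N,G\},F_N\}$ converges to $-\min_{q,c}D(q,c)$ and $\min \{\{F_N,G\},F_N\}$ to $-\max_{q,c}D(q,c)$, whereas the unperturbed extrema of $\{\{F,G\},F\}$ equal $-\min_q G''(q)$ and $-\max_q G''(q)$. Both inequalities of the theorem thus reduce to a single geometric requirement: the range of $D$ over $(q,c)\in\R\times[-1,1]$ should lie inside the open interval $\bigl(0.99\min_q G''(q),\;0.99\max_q G''(q)\bigr)$.

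The explicit choice of $(G, a)$ meeting this requirement is the genuine obstacle. The model I would use is $G(q) = \sin q + q$ on $[-\pi, \pi]$, for which $G''(\mp\pi/2) = \mp 1$ are the extrema of $G''$ and $G'(\pm\pi/2) = 1$; the difficulty is that at $q_* := \pi/2$ and $q_\# := -\pi/2$ the unperturbed value $G''(q)$ coincides with an endpoint of the target interval, so $a$ must be nonzero there to push $D$ strictly inside. Setting $a(q_*)=1$ and $a'(q_*)=1.9$ gives $D(q_*, c) = -(1+c)^2 + 1.9(1+c)$, a downward parabola whose range on $[-1, 1]$ is approximately $[-0.2,\,0.9]$, sitting well inside $(-0.99,\,0.99)$; the symmetric data at $q_\#$ is realized by taking $a$ odd (which is compatible with the $q\mapsto -q$ symmetry of $G''$), so that $a(q_\#) = -1$, $a'(q_\#) = 1.9$, and $D(q_\#, c) = (1-c)^2 - 1.9(1-c)$ has the mirror range $[-0.9,\,0.2]$. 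Away from $q_*, q_\#$ the unperturbed value $G''(q)$ is already strictly inside $(-0.99, 0.99)$, so $a$ can be made to decay smoothly to zero so that $D(q,\cdot)$ remains inside the target interval throughout; the only real technical step is the continuous interpolation between these two endpoint regimes, which boils down to checking that the amount by which $a$ lifts the value at $q$ is at least comparable to the amount by which $|G''(q)|$ exceeds $0.99$.

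Finally, to transfer the example to an arbitrary symplectic manifold $(M,\omega)$, embed $\mathrm{supp}\,F\cup\mathrm{supp}\,G$ into a Darboux chart of $M$ and extend $F$, $G$, and each $F_N$ by zero. Since the Poisson bracket is local, the four extremal quantities entering the two required inequalities are unchanged by this embedding, and the inequalities proved on $\R^2$ transfer verbatim.
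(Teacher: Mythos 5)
Your setup and the algebraic reduction are exactly the paper's: with $F=F(p)$, $G=G(q)$, $F_N=F+\tfrac{a(q)}{N}\sin NF(p)$, one gets $\{\{F_N,G\},F_N\}=-(F')^2 D(q,\cos NF(p))+O(1/N)$ with your $D(q,c)=G''(q)(1+a(q)c)^2+G'(q)a'(q)(a(q)+c)$, which is the paper's $R(p,q)$ up to the sign convention $G=-v$, and the theorem does reduce to confining the range of $D$ over $q\in\R$, $c\in[-1,1]$ to $(-0.99,0.99)$. The local verification at $q_\ast=\pi/2$, $q_\#=-\pi/2$ is also sound.

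The genuine gap is precisely the step you label ``the only real technical step.'' For your model $G(q)=\sin q+q$ the region where $|G''(q)|>0.99$ is a fixed interval of width $\approx 0.28$ around each of $q_\ast,q_\#$, and on that region the choice of $a$ is delicate: both of the obvious extensions of your local data fail. With the linear extension $a(q)=1+1.9(q-\pi/2)$, at $q=\pi/2-0.3$ (so $G''\approx-0.955$, $G'\approx1.296$, $a\approx0.43$, $a'=1.9$) one computes $D(q,1)\approx1.57>0.99$; with the constant choice $a\equiv 1$ near $q_\ast$ one has $a'=0$ there and $D(q,1)=-\sin q\cdot 4\approx-3.96$ already at $q=\pi/2-0.14$. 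So the interpolation is not ``checking the amount by which $a$ lifts the value'' --- increasing $a$ can overshoot through the squared term. The paper avoids exactly this by two design choices you did not reproduce: it builds $v=-G$ so that $v''$ spikes from $\sim 0.01$ to $\pm 1$ and back within an interval of adjustable length $\delta$ (and stays $\le 0.01$ outside, where the bound on $D$ is trivial), and on that short interval it prescribes $a$ by the ODE $a'(q)=-\gamma\,v''(q)/v'(q)$ with $\gamma=1.63$, which keeps the second argument of $r(\alpha,\gamma,z)=(\alpha z+1)^2-\gamma(\alpha+z)$ frozen at $\gamma$ and confines $\alpha=a(q)$ to a tiny neighborhood of $1.1$. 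Without an analogue of that mechanism, your interpolation claim is unsubstantiated, and the model you propose appears to resist it. A secondary, fixable omission: $F=u(p)$ and $G(q)$ are compactly supported in one variable only, so their supports in $\R^2$ are noncompact strips; before implanting into a Darboux chart one must cut off, as the paper does with the function $\phi$ that is $\equiv 1$ on $I\times J$, using that all the relevant brackets are supported there.
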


\medskip
\noindent {\sc Beginning the construction:}  To make the example
more transparent we first describe it in the two-dimensional case.
Consider $M=\R^2$ with the standard symplectic form $\omega=dp\wedge
dq$. Recall that the Poisson bracket is defined by
$\{F,G\}=dF(\text{sgrad}\;G)$, so $\{p,q\} = -1$. We first look for
$F,G,F_N: \R^2\to \R$  in the form
$$F=u(p),\;G=-v(q),\; F_N = u(p) + \frac{1}{N} a(q)\sin Nu(p)\;.$$
Here $u,v,a$ are compactly supported in $\R$, thus at this stage
$F,G,F_N$ do {\bf not} have compact supports in $\R^2$ yet. This
will be corrected later. The choice of $u(p)$ is essentially
arbitrary, while $v(q),a(q)$ will be chosen in a special way
below.

\medskip
\noindent Observe that
$$ \{\{F,G\},F\} = u'(p)^2 v''(q)\;,$$
so $$\max \{\{F,G\}, F\}=\max u'(p)^2 \max v''(q)\;.$$ Furthermore,
a straightforward but lengthy calculation shows that
$$\{ \{F_N,G\}, F_N\} = u'(p)^2 R(p,q)+ O(\frac{1}{N}),$$
where
$$R(p,q) = v''(q) (a(q)\cos Nu(p) +1)^2 + a'(q)v'(q)(a(q)+\cos N
u(p))\;.$$ Write $v'(q) = w(q)$ so that
\begin{equation}\label{eq-R}
R(p,q) = w'(q) (a(q)\cos Nu(p) +1)^2 + a'(q)w(q)(a(q)+\cos N
u(p))\;. \end{equation}

\medskip
\noindent Introduce the function
$$r(\alpha,\gamma,z):= (\alpha z+1)^2 -\gamma (\alpha+z)\;.$$
With this notation
$$R(p,q) = w'(q)r\bigg(a(q),-a'(q)w(q)/w'(q),\cos Nu(p)\bigg)$$
whenever $w'(q) \neq 0$.

\medskip
\noindent
\begin{lemma}\label{lem-r} For the specific fixed values $\alpha = 1.1$ and
$\gamma=1.63$ the function $z\mapsto r_{\alpha, \gamma} (z) := r
(\alpha, \gamma, z)$ satisfies the following inequality:
$$-0.99 < r_{\alpha,\gamma} (z) < 0.99  \;\; \forall z \in
[-1,1]\;.$$
\end{lemma}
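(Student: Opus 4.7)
The plan is to substitute the specified values $\alpha = 1.1$ and $\gamma = 1.63$ into the definition of $r(\alpha,\gamma,z) = (\alpha z + 1)^2 - \gamma(\alpha + z)$ and reduce the statement to an elementary estimate on a single quadratic polynomial in $z$. Expanding gives
\[
r_{\alpha,\gamma}(z) \;=\; 1.21\,z^2 \,+\, 0.57\,z \,-\, 0.793,
\]
so the task becomes showing that this quadratic stays strictly inside $(-0.99,\,0.99)$ on the interval $[-1,1]$.

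Since the leading coefficient $1.21$ is positive, $r_{\alpha,\gamma}$ is a convex parabola, and I would handle the two one-sided bounds separately. For the upper bound I would note that the maximum of a convex function on an interval is attained at an endpoint, then simply compute $r_{\alpha,\gamma}(-1) = -0.153$ and $r_{\alpha,\gamma}(1) = 0.987$, both strictly less than $0.99$. For the lower bound I would locate the vertex at $z_0 = -0.57/(2\cdot 1.21) \approx -0.2355$, which lies in $[-1,1]$, and evaluate
\[
\min_{z\in[-1,1]} r_{\alpha,\gamma}(z) \;=\; r_{\alpha,\gamma}(z_0) \;=\; -0.793 - \frac{0.57^2}{4\cdot 1.21} \;\approx\; -0.860,
\]
which is strictly greater than $-0.99$.

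There is no real obstacle here — the lemma is a direct numerical verification, and the only thing to watch is that the inequalities are strict (so one should keep a definite margin, here roughly $0.003$ on the upper side and $0.13$ on the lower side, which is comfortable). The reason for stating the lemma with the specific constants $1.1$ and $1.63$ rather than nearby rational values is precisely to make this margin positive; any rounding in the arithmetic should therefore be done conservatively (e.g.\ overestimating absolute values) to preserve the strictness. This is the only point where care is needed.
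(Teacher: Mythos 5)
Your proof is correct and follows essentially the same route as the paper: expand $r_{1.1,1.63}(z)=1.21z^2+0.57z-0.793$, evaluate at the endpoints $z=\pm1$ (giving $-0.153$ and $0.987$) and at the critical point (giving $\approx -0.86$), and conclude $|r|<0.99$ on $[-1,1]$. Your explicit appeal to convexity to justify that the max is at an endpoint and the min at the vertex is a slightly more careful phrasing of the same check.
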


\medskip
\noindent
\begin{proof} Write
$$r(z):=r_{1.1, 1.63} (z)= 1.21 z^2 + 0.57 z - 0.793\;.$$
We have to check the values of $r$ at the endpoints $\pm 1$ and at
the critical point. We have $r(-1)= -0.153, r(1) = 0.987$. The
critical value equals $-0.57^2/(4 \cdot 1.21) - 0.793 \approx
-0.86$. We conclude that $|r(z)| < 0.99$ for $z \in [-1,1]$.
\end{proof}

\medskip
\noindent Now we are ready to describe our example. Fix real
numbers $c_1 <c_2<c_3<c_4$ so that $c_{i+1}-c_i=\delta$, where
$\delta>0 $ will play the role of a small parameter in our
construction.  Fix $\kappa>0$ so that the inequality in
Lemma~\ref{lem-r} holds for the function $r_{\alpha, 1.63} (z)$
(over $[-1,1]$) for every $\alpha \in [1.1-\kappa,1.1+\kappa]$.
The compactly supported functions $w$ and $a$ are chosen as
follows:

\medskip\noindent{\sc Conditions on $w(q)$:}

\medskip
\noindent (i) $\max_{[c_2,c_3]} w' = 1$,  $\min_{[c_2,c_3]} w' =
-1$,  $w'(c_2)= 0.001$, $w'(c_3)=-0.001$.

\medskip
\noindent (ii) $ 1 \leq w(q) \leq 2$ for $q \in [c_1,c_4]$.

\medskip
\noindent (iii) $|w'(q)| \leq 0.01$ for $q \in \R_+ \setminus
[c_2, c_3]$.

\medskip
\noindent (iv)
 $|w(q)| \leq 3$ for $q \in
\R_+ \setminus [c_1, c_4]$.

\medskip
\noindent (v) $\max w = -\min w =1$.

\medskip
\noindent (vi) $\int_{-\infty}^{+\infty} w(q)\;dq=0$.

\bigskip

\medskip\noindent{\sc Conditions on $a(q)$:}

\medskip
\noindent (i) $a'(q) = -1.63 w'(q)/w(q)$ for $q \in [c_1,c_4]$.

\medskip
\noindent  (ii) $ a(q) \in [1.1-\kappa,1.1+\kappa] $ for $q \in
[c_1,c_4]$.

\medskip
\noindent (iii) $|a'(q)| \leq 0.03 $ and $|a(q)| \leq 2 $ for $q
\in \R_+ \setminus [c_1, c_4]$.

\medskip
\noindent Assumption (iii) on $a$ is compatible with (i) since on
$[c_1,c_2] \cup [c_3,c_4]$ we have $|a'(q)| \leq 2\cdot 0.01:1 =
0.02$. Assumption (ii) is achieved by taking $\delta$ as small as
needed.

\medskip
\noindent {\sc The bound on $R(p,q)$:} Let us check  that
\begin{equation}\label{eq-R-vsp1}
|R(p,q)| \leq 0.99\cdot \max w'= -0.99\cdot \min w' = 0.99\;.
\end{equation}

\medskip
\noindent {\sc Case 1:} For $q \in [c_1,c_4]$ we have
$$|R(p,q)| = |w'(q)| \cdot |r(\alpha, 1.63, z)|,$$
with $ \alpha \in [1.1-\kappa,1.1+\kappa] $ and $z \in [-1,1]$. By
Lemma~\ref{lem-r} and due to the choice of $\kappa$, we have
$|R(p,q)| \leq 0.99 \cdot |w'(q)| \leq 0.99$.

\medskip
\noindent{\sc Case 2:} For $q \in \R_+ \setminus [c_1,c_4]$ we
have
$$-0.99< -0.36= -0.01\cdot (2+1)^2 - 0.03\cdot 3\cdot (2+1) \leq$$
$$\leq R(p,q) \leq 0.01 \cdot (2+1)^2 + 0.03\cdot 3 \cdot (2+1) = 0.36 <
0.99\;.$$ This completes the proof of \eqref{eq-R-vsp1}.

\medskip
\noindent{\sc Summary:} Since $ \{\{F,G\},F\} = u'(p)^2 w'(q)$ and
$$\{\{F_N,G\}, F_N\} = u'(p)^2 R(p,q)+ O(\frac{1}{N})\;,$$ we conclude
that the functions $F,F_N, G$ satisfy inequalities in Theorem
\ref{thm-no-semicontinuity}.

\medskip
\noindent{\sc Making the functions compactly supported in $\R^2$:}
According to our construction, the support of the function $u=u(p)$,
as a function on $\R$, is contained in some closed interval $I$ and
the supports of $v=v(q)$ and $a=a(q)$, as functions on $\R$, are
both contained in some closed interval $J$. Hence the supports of
$F$ and $F_N$ in $\R^2$ are contained in $I\times \R$ and the
support of $G$ is contained in $\R\times J$.

Let us choose a cut-off function $\phi: \R^2\to [0,1]$ which is
equal to $1$ on $I\times J$. Then
$$ \{ \phi F, \phi G\} = \phi^2 \{ F,G\},$$
since $\phi$ is constant on $\supp F\, \cap\ \supp G$. For the same
reason, since \break $\supp \{ F, G\} \subset I\times J$ and $\phi$
is constant on $\supp F\, \cap\, \supp \{ F, G\}$, we have
$$\{ \phi F,\{ \phi F, \phi G\}\} = \phi^3 \{ F, \{ F, G\}\}.$$
Since $\supp \{ F, \{ F, G\}\} \subset \supp F\, \cap\, \supp \{ F,
G\} \subset I\times J$ we get that
$$\max \{ \phi F, \{ \phi F, \phi G\}\} = \max \{ F, \{ F, G\}\}.$$

Replacing $F$ by $F_N$ and noticing that the previous considerations
depend only on the supports of $F$ and $G$, we get that
$$\{ \phi F_N,\{ \phi F_N, \phi G\}\} = \phi^3 \{ F_N, \{ F_N, G\}\}$$
and
$$\max \{ \phi F_N, \{ \phi F_N, \phi G\}\} = \max \{ F_N, \{ F_N, G\}\}$$
for the same reason as above.

The same equalities holds for the minima. Thus the functions
 $\tF:=\phi F, \tF_N:=\phi F_N, \tG:=\phi G$ are
compactly supported in $\R^2$ and satisfy the inequalities in
Theorem~\ref{thm-no-semicontinuity}.

\medskip
\noindent {\sc Implanting into a symplectic manifold:} Now the
example can be easily generalized to a higher-dimensional Darboux
chart (and hence implanted into any symplectic manifold), cf.
\cite{EP-Poisson1}, proof of Theorem 1.6.

Namely, assume ${\rm dim}\, M = 2n > 2$. In a local Darboux chart
with coordinates $p_1, q_1,\ldots, p_n, q_n$ on $M$ choose an open
cube $$P= K^{2n-2} \times K^2,$$ where $K^{2n-2}$ is an open cube
in the $(p_1, q_1,\ldots, p_{n-1}, q_{n-1})$-coordinate plane and
$K^2$ is a open square in the $(p_n, q_n)$-coordinate plane. Fix a
smooth compactly supported function $\chi: K^{2n-2}\to [0,1]$
which reaches the value $1$ at some point. Given a smooth
compactly supported function $L$ on $K^2$, define the function
$\chi L\in C^\infty_c (M)$ as
$$\chi L (p_1, q_1,\ldots, p_n, q_n) := \chi (p_1, q_1,\ldots, p_{n-1}, q_{n-1}) L (p_n, q_n)$$
on $P$ and as zero outside $P$.

Now pick functions $\tF,\tG,\tF_N\in C^\infty_c (K^2)$ as above. Set
$$F':= \chi \tF, G':=\chi \tG, F'_N := \chi \tF_N \in C^\infty_c (P)\subset C^\infty_c (M).$$
Clearly $F'_N$ converges uniformly to $F'$. It is also clear that
$$\{ \{ F', G'\}, F'\} = \{ \{ \chi \tF, \chi \tG\}, \chi \tF\}
= \chi^3 \{\{ \tF, \tG\},  \tF\},$$
$$\{ \{ F'_N, G'\}, F'_N\} = \{ \{ \chi \tF_N, \chi \tG\}, \chi \tF_N\}
= \chi^3 \{\{ \tF_N, \tG\},  \tF_N\},$$ because the Poisson bracket
of $\chi$ and any function of $p_n, q_n$ vanishes identically.
Recalling how $\chi$ was chosen we see that
$$\max \{ \{ F', G'\}, F'\} = \max \{ \{ \tF, \tG\}, \tF\},$$
$$\max \{ \{ F'_N, G'\}, F'_N\} = \max \{ \{ \tF_N, \tG\}, \tF_N\}.$$
Hence $F'$, $G'$, $F'_N$ satisfy the required properties (since
$\tF$, $\tG$, $\tF_N$ do). This completes the proof of the theorem.
\Qed

\medskip
\noindent\begin{rem}{\rm  It would be interesting to find out how
small can the ratio
\[
\frac{\displaystyle\liminf_{F',G'\stackrel{C^0}{\longrightarrow}F,G}
\osc\, \{\{ F',G'\}, F'\} }{\osc\, \{\{ F,G\}, F\} } \]
 be made by varying $F$
and $G$ so that $\osc\, \{\{F,G\},F\}\neq 0$. Clearly, this ratio
always belongs to $(0,1]$. In the example constructed above it is no
bigger than 0.99. In fact, one can slightly modify that example to
show that the ratio
$$\frac{\displaystyle\liminf_{F',G'\stackrel{C^0}{\longrightarrow}F,G} \max\,
\{\{F',G'\},F'\}}{\max\, \{\{F,G\},F\}}$$ can be made arbitrarily
small by an appropriate choice of $F$ and $G$.}
\end{rem}

\section{The proof of the Dichotomy
Theorem: conclusion}\label{sec-end}

We shall write $\R^4_+$ for the non-negative orthant of $\R^4$.
The functionals $\Phi^v(F,G)$ respect the natural actions of the
dihedral group $D_4$ on vectors $v \in \R^4_+$ and the variables
$(F,G)$. In particular, define linear transformations $A,B,C$ of
$\R^4$ by
$$A(v_1,v_2,v_3,v_4) = (v_2,v_1,v_3,v_4)\;,$$
$$B(v_1,v_2,v_3,v_4) = (v_1,v_2,v_4,v_3)\;,$$
$$C(v_1,v_2,v_3,v_4) = (v_3,v_4,v_1,v_2)\;$$
which generate the $D_4$-action on $\R^4_+$. Then $$\Phi^v (F,-G) =
\Phi^{Av}(F,G),$$ $$\Phi^v(-F,G)=\Phi^{Bv}(F,G),$$
$$\Phi^v(-G,-F)=
\Phi^{Cv}(F,G)\;.$$

Next, $\Phi^v$ obeys the following scaling laws: given
$\alpha,\beta>0$, we have $\Phi^v(\alpha F,\beta G) = \Phi^w(F,G)$
with $$w=  (\alpha^2\beta v_1, \alpha\beta^2 v_2,\alpha^2\beta
v_3, \alpha\beta^2 v_4)\;.$$

Note now that all the properties of functionals $\Phi^v$ appearing
in the Dichotomy Theorem are invariant under the action of the
dihedral group and rescaling.

\medskip
\noindent

\medskip
\noindent {\bf Proof of Theorem~\ref{thm-main-1}(i)} Assume that
either $v_3=v_4=0$ or $v_1=v_2=0$. Applying if necessary  the
dihedral group we can assume without loss of generality that
$$\Phi^v(F,G) = v_1\max\{\{F,G\},F\}-v_2\min\{\{F,G\},F\}\;, v_1 > 0,v_2 \geq 0\;.$$
By Theorem~\ref{thm-no-semicontinuity} $\Phi^v$ is not lower
semicontinuous. Further,  $$\Phi^v(F,G) \geq
v_1\max\{\{F,G\},F\}\;,$$ and hence $\Phi^v$ is weakly robust by
inequality \eqref{eqn-double-bracket-liminf-osc}.  \qed

\medskip
\noindent {\bf Proof of Theorem~\ref{thm-main-1}(ii)} Put
$$\mu_+(F,G)=\max\{\{F,G\},F\},\;\mu_-(F,G) =
-\min\{\{F,G\},F\}\;,$$
$$\nu_+(F,G)=\max\{\{F,G\},G\},\;\nu_-(F,G) =
-\min\{\{F,G\},G\}\;.$$ With this notation $\mu_+(F,G)+\nu_+(F,G)$
is lower semicontinuous by Theorem~\ref{thm-rate}. We shall need
the following auxiliary result.

\medskip
\noindent\begin{lemma}\label{lem-vsp-iii} Let
$(F_i,G_i)\stackrel{C^0}{\longrightarrow}(F,G)$. If $\mu_+(F_i,G_i)
\leq K <\infty$ for all $i$
\begin{equation}\label{eq-vsp-iii-minus}
\liminf_{i\to \infty} \nu_-(F_i,G_i) \geq \nu_-(F,G)\;.
\end{equation}
Similarly, if $\nu_+(F_i,G_i) \leq K <\infty$ for all $i$
\begin{equation}\label{eq-vsp-iii-minus-mu}
\liminf_{i\to \infty} \mu_-(F_i,G_i) \geq \mu_-(F,G)\;.
\end{equation}

\end{lemma}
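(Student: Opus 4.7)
My plan is to deduce the lemma from the lower semicontinuity of $\Phi$ given by Theorem~\ref{thm-rate}, combined with a sign-change and a homogeneity trick that lets me decouple $\mu_+$ from $\nu_-$. The substitution $(F,G) \mapsto (-F,G)$ is $C^0$-continuous and acts on $\Phi$ as follows: $\mu_+$ is unchanged (since $\{\{-F,G\},-F\} = \{\{F,G\},F\}$), while $\max\{\{F,G\},G\}$ turns into $-\min\{\{F,G\},G\}$ (since $\{\{-F,G\},G\} = -\{\{F,G\},G\}$). Hence
\[
\Phi(-F,G) \;=\; \mu_+(F,G) + \nu_-(F,G)
\]
is lower semicontinuous. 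Applying the scaling laws $\mu_+(\alpha F,\beta G) = \alpha^2\beta\,\mu_+(F,G)$ and $\nu_-(\alpha F,\beta G) = \alpha\beta^2\,\nu_-(F,G)$ with $\alpha,\beta>0$ and dividing by $\alpha\beta^2$, I conclude that for every $t>0$ the functional
\[
\Psi_t(F,G) \;:=\; t\,\mu_+(F,G) + \nu_-(F,G)
\]
is $C^0$-lower semicontinuous.

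With this family in hand, assume $(F_i,G_i)\stackrel{C^0}{\longrightarrow}(F,G)$ with $\mu_+(F_i,G_i) \le K$. The lower semicontinuity of $\Psi_t$ combined with the elementary inequality $\liminf(a_i+b_i)\le \limsup a_i + \liminf b_i$ gives
\[
tK + \liminf_{i \to \infty} \nu_-(F_i,G_i) \;\ge\; \liminf_{i \to \infty} \Psi_t(F_i,G_i) \;\ge\; t\mu_+(F,G) + \nu_-(F,G),
\]
which rearranges to
\[
\liminf_{i \to \infty} \nu_-(F_i,G_i) \;\ge\; \nu_-(F,G) + t\bigl(\mu_+(F,G) - K\bigr)
\]
for every $t>0$. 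Sending $t \to 0^+$ kills the correction term and yields \eqref{eq-vsp-iii-minus}. The second assertion \eqref{eq-vsp-iii-minus-mu} follows by the fully symmetric argument: the identity $\Phi(F,-G) = \mu_-(F,G) + \nu_+(F,G)$ together with scaling produces lower semicontinuous functionals $\Psi'_s := s\,\nu_+(F,G) + \mu_-(F,G)$ for every $s>0$, and one repeats the $s \to 0^+$ step.

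The only non-formal ingredient is the scaling trick that produces the one-parameter family $\{\Psi_t\}_{t>0}$ of lower semicontinuous functionals. Without this freedom in $t$, the raw lower semicontinuity of $\mu_+ + \nu_-$ would only control $\mu_+$ against $K$ with coefficient one, which is insufficient in the regime $\mu_+(F,G) > K$ allowed by the failure of lower semicontinuity of $\mu_+$ itself (Theorem~\ref{thm-main-1}(i)); the scaling makes the unwanted coefficient arbitrarily small, so this difficulty never actually appears.
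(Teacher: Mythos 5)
Your proof is correct and takes essentially the same route as the paper: the key observation in both is that the scaling law lets you shrink the coefficient of $\mu_+$ at will, so that the $C^0$-lower semicontinuity of $\mu_+ + \nu_+$ from Theorem~\ref{thm-rate} (transported through the sign change $F\mapsto -F$) forces the desired bound on $\nu_-$. The paper organizes this as a proof by contradiction with a single sufficiently small scaling parameter $\alpha$, first establishing the $\nu_+$ analogue and then substituting $-F$, whereas you argue directly by forming the one-parameter family $\Psi_t = t\mu_+ + \nu_-$ and sending $t\to 0^+$; these are interchangeable packagings of the same idea.
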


\begin{proof} Assume that $\mu_+(F_i,G_i)
\leq K <\infty$ for all $i$. First we prove that
\begin{equation}\label{eq-vsp-iii-plus}
\liminf_{i\to \infty} \nu_+(F_i,G_i) \geq \nu_+(F,G)\;.
\end{equation}
Assume on the contrary that along a subsequence
$$\lim_{i \to \infty} \nu_+(F_i,G_i) \leq E<  \nu_+(F,G)\;.$$
Pick  $0< \alpha < (\nu_+(F,G)-E)/K$. Then for $i$ sufficiently
large
$$\mu_+(\alpha F_i,G_i) + \nu_+(\alpha F_i,G_i)= \alpha^2\mu_+(F_i,G_i) +\alpha \nu_+ (F_i,G_i)
\leq  \alpha (\alpha K+ E)$$ $$ < \nu_+(\alpha F, G) <
\mu_+(\alpha F,G) +\nu_+(\alpha F,G)\;,$$ which contradicts lower
semicontinuity of $\mu_+ + \nu_+$ at $(\alpha F,G)$.  This proves
\eqref{eq-vsp-iii-plus}. Replacing $F$ by $-F$, we obtain the
desired inequality \eqref{eq-vsp-iii-minus} from
\eqref{eq-vsp-iii-plus}. The proof of \eqref{eq-vsp-iii-minus-mu}
is analogous.
\end{proof}

\medskip
\noindent Now we are ready to complete the proof of  Theorem
\ref{thm-main-1}(ii). Let $v \in \R^4_+$ be such that at least one
of $v_1,v_2$ is positive and at least one of $v_3,v_4$ is
positive. Applying the action of the dihedral group and rescaling,
we can achieve that $v_1=v_3=1$, and thus, without loss of
generality,
$$\Phi^v = \mu_+ + \nu_+ + v_2 \cdot \mu_- + v_4 \cdot \nu_-\;.$$
Assume that $(F_i,G_i)\stackrel{C^0}{\longrightarrow}(F,G)$ so
that $\Phi^v(F_i,G_i)$ is bounded. The lower semicontinuity of
$\mu_+ +\nu_+$ and Lemma~\ref{lem-vsp-iii} yield
$$\liminf_{i \to \infty} \Phi^v (F_i,G_i) \geq \Phi^v(F,G)\;.$$
This finishes off the proof. \qed

\section{Higher iterated brackets: discussion}\label{sec-disc}

Denote by $\cal{P}_N$, $N \in \N$, the set of all Lie monomials in
two variables involving $N$-times-iterated Poisson brackets. Given
monomials $p_1,...,p_d \in \cal{P}_N$ and non-negative numbers
$\alpha_j, \beta_j \geq 0$, consider a functional $\Phi(F,G)$,
given by
\begin{equation}\label{eq-discus-1}
\sum_{j=1}^d \alpha_j \cdot \max p_j(F,G) -\beta_j \cdot \min
p_j(F,G)\;. \end{equation}
 In the case $N \geq 3$ the problem of
detecting whether $\Phi$ is weakly robust or lower semicontinuous is
at the moment almost completely out of reach. The simplest case
where the answer is unknown to us is $N=3,d=1, p(F,G) =
\{\{\{F,G\},F\},G\}$.

To emphasize the main difficulty, let us recall that our strategy
of proving the lower semi-continuity in the case of the ordinary
bracket and the double bracket is as follows: We design an
expression of the form $u_{t} = \prod_j \phi^{t}_{a_j F + b_j G}$
so that the Hofer's (semi)norm $\rho(u_t)$ admits ``tight" lower
and upper bounds in terms of the maxima/minima of Lie polynomials
involving monomials $p_j$ entering $\Phi$. For instance, for
$\Phi(F,G) = \{F,G\}$ we use the flow $u_t =
[\phi^{-t}_F,\phi^{-t}_G]$ and for $\Phi(F,G)=\max\{\{F,G\},F\} +
\max\{\{F,G\},G\}\;$ we use the flow $u_t = [\phi^t_{-F}
\phi^t_{-G}, \phi^t_{F+G}]\;.$ Combining the above-mentioned lower
and upper bounds, we obtain an inequality involving iterated
Poisson brackets, which eventually yields the desired
semicontinuity.

For a general functional of the form \eqref{eq-discus-1} it is
unclear how to design expressions $u_t$ as above leading to
``tight" lower and upper bounds for $\rho(u_t)$, and it is even
unclear whether such the expressions do exist at all. Thus new
ideas are needed.

\medskip
\noindent
\begin{question}\label{quest-1} Is the functional
$$\Phi_N(F,G) = \sum_{p \in \cal{P}_N} \osc\,p(F,G)$$ lower
semicontinuous?
\end{question}

\medskip
\noindent  Note that the answer is affirmative for $N=1,2$.

\medskip

As far as the weak robustness is concerned, we are able to settle
a particular case which is a direct generalization of
Proposition~\ref{thm-double-bracket}. Denote by $\text{ad}_F:
C^{\infty}_c (M) \to C^{\infty}_c (M)$ the operator $G \mapsto
\{G,F\}$.

\begin{prop}\label{prop-iter} For every $N \in \N$ the functional
$\Phi(F,G) = \osc\,(\text{ad}_F)^N G $ is weakly robust.
\end{prop}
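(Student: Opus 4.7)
The plan is to prove an $N$-th order analogue of Proposition~\ref{thm-double-bracket} by invoking the classical Kolmogorov inequality in one variable, and then deduce weak robustness exactly as in the $N=2$ case treated in Section~\ref{sec-LH}. The key function-theoretic input is: there is a constant $c_N>0$ such that for every smooth bounded function $u\fcolon\R\to\R$ whose first $N$ derivatives are bounded,
\begin{equation}\label{eq-kolmoplan}
\|u^{(N)}\|\ \geq\ c_N\,\frac{\|u'\|^{N}}{\|u\|^{N-1}}\,.
\end{equation}
For $N=2$ this is Proposition~\ref{ineq-lh} (using $\osc u\leq 2\|u\|$); in general it is Kolmogorov's classical inequality, see, e.g., \cite{Mitrinovic-et-al}, whose proof follows the same interpolation scheme.

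Next, I would transport \eqref{eq-kolmoplan} to the symplectic setting exactly as in the proof of Proposition~\ref{thm-double-bracket}. Let $f_t$ denote the Hamiltonian flow of $F$ and pick $x_0\in M$ realizing $|\{F,G\}(x_0)|=\|\{F,G\}\|$; set $u(t):=G(f_tx_0)$. The sign convention gives $u^{(k)}(t)=(\text{ad}_F)^k G(f_tx_0)$ for every $k\geq 0$, so $\|u'\|\geq|u'(0)|=\|\{F,G\}\|$, $\|u\|\leq\osc G$ (since $G$ is compactly supported in the open case, or normalized in the closed case), and $\|u^{(N)}\|\leq\|(\text{ad}_F)^N G\|$. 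Moreover, $(\text{ad}_F)^NG$ has zero integral by Stokes, so $\osc\,(\text{ad}_F)^N G\geq\|(\text{ad}_F)^N G\|$. Feeding these inequalities into \eqref{eq-kolmoplan} yields the key estimate
\begin{equation}\label{eq-keyplan}
\osc\,(\text{ad}_F)^N G\ \geq\ c_N\,\frac{\|\{F,G\}\|^{N}}{(\osc G)^{N-1}}\,.
\end{equation}

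Weak robustness follows immediately. If $\Phi(F,G)>0$, then \eqref{eq-keyplan} forces $\|\{F,G\}\|>0$ and $\osc G>0$. Applying \eqref{eq-keyplan} to any $(F_i,G_i)\stackrel{C^0}{\longrightarrow}(F,G)$ and using the lower semicontinuity of $\|\{\cdot,\cdot\}\|$ from \cite{EP-Poisson1} in the numerator together with the $C^0$-continuity of $\osc$ in the denominator gives
\[\liminf_{i\to\infty}\Phi(F_i,G_i)\ \geq\ c_N\,\frac{\|\{F,G\}\|^N}{(\osc G)^{N-1}}\ >\ 0\,,\]
as required. The only ingredient beyond the $N=2$ argument of Section~\ref{sec-LH} is the one-variable Kolmogorov estimate \eqref{eq-kolmoplan}; once it is in hand, no real obstacle arises and the overall proof is a direct transcription of that of Proposition~\ref{thm-double-bracket}.
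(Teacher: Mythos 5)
Your proposal is correct and follows essentially the same route as the paper: reduce to the one-variable Kolmogorov inequality along the Hamiltonian flow of $F$ at a point where $\|\{F,G\}\|$ is realized, then invoke the lower semicontinuity of $\|\{\cdot,\cdot\}\|$. The only (cosmetic) differences are that you keep $\osc G$ rather than $\|G\|$ in the denominator and spell out the conversion between $\osc$ and the uniform norm via the zero-mean observation, which the paper leaves implicit.
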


\medskip
\noindent \begin{proof} Indeed, by Kolmogorov's generalization of
the Landau-Hadamard inequality \cite{Kolm1, Kolm2}, cf.
\cite{Mitrinovic-et-al}, there exists a constant $C_N
>0$ so that for every smooth function $v \in C^{\infty}(\R)$, which
is bounded with its $N$ derivatives and does not vanish identically,
one has the following lower bound on the uniform norm of the $N$-th
derivative $v^{(N)}$ of $v$ in terms of its first derivative $v'$:
$$\osc\,v^{(N)} \geq C_N \cdot \frac{||v'||^N }{||v||^{N-1}}\;.$$
Choose a point $x$ in the symplectic manifold $M$ where the uniform
norm of $\{G,F\}$ is attained. Applying the Kolmogorov inequality to
the function $v(t)= G(\phi_F^t(x))$ we get that
\begin{equation}
\label{eqn-mult-brackets-robust} \Phi(F,G) \geq C_N
\frac{||\{F,G\}||^N}{||G||^{N-1}}\;.
\end{equation}
 Since the functional $\{F,G\}$
is lower semicontinuous, the above inequality readily yields the
weak robustness of $\Phi$. \end{proof}

\medskip

Inequality \eqref{eqn-mult-brackets-robust} can be iterated as
follows. For integers $m \geq 1, k \geq 0$ introduce the functional
$$\Phi_{k,m}(F,G) = \osc\,(\text{ad}_H)^m G\;,\text{where}\;\;H=
(\text{ad}_G)^k
F\;.$$

\begin{prop}\label{prop-iter-1} There exist constants $C_{k,m}> 0$ so
that
$$\Phi_{k,m} (F,G) \geq C_{k,m} \cdot
\frac{||\{F,G\}||^{(k+1)m}}{||F||^{km}||G||^{m-1}}\;.$$ In
particular, $\Phi_{k,m}$ is weakly robust.
\end{prop}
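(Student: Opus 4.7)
The plan is to iterate the Kolmogorov--Landau--Hadamard inequality already used in the proof of Proposition~\ref{prop-iter}. Setting $H = (\ad_G)^k F$, observe that $\{H,G\} = (\ad_G)^{k+1}F$. The argument consists of two applications of Kolmogorov's inequality: the first bounds $\Phi_{k,m}(F,G)$ from below in terms of $\|\{H,G\}\|$ and $\|G\|$, and the second bounds $\|\{H,G\}\|$ from below in terms of $\|\{F,G\}\|$ and $\|F\|$. Combining them yields the stated inequality.

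For the first step I simply rerun the argument of Proposition~\ref{prop-iter} with $(F,G)$ replaced by $(H,G)$ and $N = m$. Pick $y_0 \in M$ so that $|\{G,H\}(y_0)| = \|\{G,H\}\|$ and set $v(t) = G(\phi_H^t(y_0))$. Then $v^{(m)}(t) = (\ad_H)^m G (\phi_H^t(y_0))$, $\|v\| \leq \|G\|$, and $\|v'\| \geq \|\{H,G\}\|$, so that
\begin{equation*}
\Phi_{k,m}(F,G) = \osc\,(\ad_H)^m G \;\geq\; \osc\, v^{(m)} \;\geq\; C_m \frac{\|\{H,G\}\|^m}{\|G\|^{m-1}}.
\end{equation*}

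For the second step I apply Kolmogorov once more, this time to the auxiliary function $w(t) = F(\phi_G^t(x_0))$ with $x_0 \in M$ chosen so that $|\{F,G\}(x_0)| = \|\{F,G\}\|$, and with $N = k+1$. Since $w^{(k+1)}(t) = (\ad_G)^{k+1} F (\phi_G^t(x_0))$, the trivial bound $\osc \leq 2\|\cdot\|$ gives $\osc\, w^{(k+1)} \leq 2\|\{H,G\}\|$, and Kolmogorov's inequality yields
\begin{equation*}
2\|\{H,G\}\| \;\geq\; \osc\, w^{(k+1)} \;\geq\; C_{k+1} \frac{\|\{F,G\}\|^{k+1}}{\|F\|^{k}}.
\end{equation*}
Substituting this back into the inequality of the first step produces the desired bound with $C_{k,m} := C_m (C_{k+1}/2)^m$.

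Weak robustness then follows exactly as in Proposition~\ref{prop-iter}: the denominators $\|F\|^{km}\|G\|^{m-1}$ are continuous in the $C^0$-topology while $\|\{F,G\}\|$ is lower semicontinuous by \cite{EP-Poisson1}, so if $\Phi_{k,m}(F,G) > 0$ the right-hand side stays bounded away from zero under $C^0$-small perturbations. I do not expect any real obstacle: the proof is a direct iteration of the Kolmogorov inequality, the only cost being a factor of $2$ in passing from $\osc\, w^{(k+1)}$ to $\|(\ad_G)^{k+1}F\|$, and no hard symplectic input is needed beyond the derivative identity $v^{(j)} = (\ad_G)^j F \circ \phi_G^t$ already exploited in Proposition~\ref{prop-iter}.
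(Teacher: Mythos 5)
Your proof is correct and follows the same route as the paper: apply the Kolmogorov-type inequality (eqn-mult-brackets-robust) twice — once to the pair $(H,G)$ with $N=m$ to get $\Phi_{k,m}(F,G)\geq C_m\|\{H,G\}\|^m/\|G\|^{m-1}$, and once (with roles of the two functions swapped) to the pair $(G,F)$ with $N=k+1$ to bound $\|\{H,G\}\|=\|(\mathrm{ad}_G)^{k+1}F\|$ from below by $\mathrm{const}\cdot\|\{F,G\}\|^{k+1}/\|F\|^{k}$ — and then chain the two; your explicit handling of the factor $2$ from $\osc\leq 2\|\cdot\|$ and the derivation of weak robustness from lower semicontinuity of $\|\{F,G\}\|$ match the paper's (abbreviated) argument.
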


\medskip
\noindent \begin{proof} Applying \eqref{eqn-mult-brackets-robust}
twice we get
$$\Phi_{k,m} (F,G) \geq \text{const}\cdot
\frac{||\{H,G\}||^m}{||G||^{m-1}} \geq \text{const}\cdot
\frac{||(\text{ad}_G)^{k+1}F||^m}{||G||^{m-1}}
$$
$$
\geq \text{const}\cdot
\frac{||\{F,G\}||^{(k+1)m}}{||F||^{km}||G||^{m-1}}\;.$$
\end{proof}

\medskip
\noindent
\begin{problem} Prove that for $N \geq 3$ the functional $\osc\,(\text{ad}_F)^N G$
 is {\bf not} lower semicontinuous.
\end{problem}

\medskip
\noindent In Section~\ref{sec-oscFGG-example} we proved this for
$N =2$. It is not clear whether our method extends to $N \geq 3$.
A natural generalization of this problem would be to detect the
failure of lower semicontinuity for the functionals $\Phi_{k,m}$
from Proposition~\ref{prop-iter-1} (except the trivial case
$k=0,m=1$).

\bigskip \noindent {\bf Acknowledgements.} We thank Vincent Humiliere
for a stimulating correspondence. We thank Lev Buhovsky for
numerous useful discussions, Misha Sodin for illuminating
consultations on the Landau-Hadamard-Kolmo\-go\-rov inequalities
and the anonymous referee for helpful comments.

\bibliographystyle{alpha}

\bigskip

\noindent
\begin{tabular}{ll}
Michael Entov & Leonid Polterovich\\
Department of Mathematics & School of Mathematical Sciences\\
Technion - Israel Inst. of Technology & Tel Aviv University\\
Haifa 32000, Israel & Tel Aviv 69978, Israel\\
entov@math.technion.ac.il & polterov@post.tau.ac.il\\
\end{tabular}

\end{document}